\crefname{hypothesis}{Hypothesis}{Hypotheses}
\title{Stable rank-adaptive \\Dynamically Orthogonal Runge-Kutta schemes\thanks{We thank the Office of Naval Research for partial support under grants N00014-19-1-2693 (IN-BDA) and N00014-19-1-2664 (Task Force Ocean, DEEP-AI).}}
\author{Aaron Charous\thanks{Mechanical Engineering, Center for Computational Science and Engineering, Massachusetts Institute of Technology, Cambridge, MA 02139 
  (\email{acharous@mit.edu}, \email{pierrel@mit.edu})}
\and Pierre F.J.\ Lermusiaux\footnotemark[2]}
\DeclareMathAlphabet{\mathpzc}{OT1}{pzc}{m}{it}
\DeclareMathAlphabet{\mathcal}{OMS}{cmsy}{m}{n}
\DeclareMathOperator{\diag}{diag}
\newcommand{\Stmr}{\mathcal{V}_{m,r}}
\newcommand{\Stnr}{\mathcal{V}_{n,r}}
\newcommand{\TStmr}{\mathcal{T}_U\mathcal{V}_{m,r}}
\newcommand{\TStnr}{\mathcal{T}_V\mathcal{V}_{n,r}}
\newcommand{\DOU}{\mathcal{U}_{m,r}}
\newcommand{\T}{\mathcal{T}}
\newcommand{\TX}{\T_{X}\mkern-2mu\Mr}
\newcommand{\TXn}{\T_{X_i}\mkern-2mu\Mr}
\newcommand{\TA}{\mathcal{T}_{A}\Mr}
\newcommand{\Mr}{\mathscr{M}_r}
\newcommand{\Mri}{\mathscr{M}_{r_{i}}}
\newcommand{\Mrii}{\mathscr{M}_{r_{i+1}}}
\newcommand{\du}{\delta_U}
\newcommand{\dv}{\delta_V}
\newcommand{\LL}{\mathscr{L}}
\newcommand{\epr}{\varepsilon_{pr}}
\newcommand{\ed}{\varepsilon_{D}}
\newcommand{\edf}{\varepsilon_{DF}}
\newcommand{\en}{\varepsilon_{\mathcal{N}}}
\newcommand{\el}{\varepsilon_{l}}
\newcommand{\et}{\varepsilon_{\text{tot}}}
\newcommand{\LLo}{\overline{\mathscr{L}}}
\newcommand{\Ld}{\overline{\EuScript{L}}}
\newcommand{\Reo}{G}
\DeclareRobustCommand{\rchi}{{\mathpalette\irchi\relax}}
\newcommand{\irchi}[2]{\raisebox{.85\depth}{$#1\chi$}} %
\newcommand{\newPoint}{\rchi}
\def\odU{\mathpalette\odUaux}
\def\odUaux#1#2{%
  \mkern2mu
  \setbox0=\hbox{\mathsurround=0pt$#1\overline{\mkern-1mu #2}$}%
  \setbox1=\hbox to \wd0{\hss$#1\cdot$\hss}%
  \vbox{\offinterlineskip\copy1\vskip-.69\ht1\box0}%
}
\def\odZ{\mathpalette\odZaux}
\def\odZaux#1#2{%
  \mkern2mu
  \setbox0=\hbox{\mathsurround=0pt$#1\overline{\mkern-3mu #2}$}%
  \setbox1=\hbox to \wd0{\hss$#1\cdot$\hss}%
  \vbox{\offinterlineskip\copy1\vskip-.69\ht1\box0}%
}
\newcommand{\Uo}{\odU{U}}
\newcommand{\Zo}{\odZ{Z}}
\def\odUUaux#1#2{%
  \mkern2mu
  \setbox0=\hbox{\mathsurround=0pt$#1\tilde{\mkern-1mu #2}$}%
  \setbox1=\hbox to \wd0{\hss$#1\cdot$\hss}%
  \vbox{\offinterlineskip\copy1\vskip-.69\ht1\box0}%
}
\def\odZZaux#1#2{%
  \mkern2mu
  \setbox0=\hbox{\mathsurround=0pt$#1\tilde{\mkern-3mu #2}$}%
  \setbox1=\hbox to \wd0{\hss$#1\cdot$\hss}%
  \vbox{\offinterlineskip\copy1\vskip-.665\ht1\box0}%
}
\newcommand{\PU}{\mathscr{P}_U}
\newcommand{\PV}{\mathscr{P}_V}
\newcommand{\PUp}{\mathscr{P}_U^\perp}
\newcommand{\PUpi}{\mathscr{P}_{U_i}^\perp}
\newcommand{\PVp}{\mathscr{P}_V^\perp}
\newcommand{\PUnp}{\mathscr{P}_{U_i}^\perp}
\newcommand{\PTA}{\mathscr{P}_{\TA}}
\newcommand{\PT}{\mathscr{P}_{\TX}}
\newcommand{\PTn}{\mathscr{P}_{\TXn}}
\newcommand{\PM}{\mathscr{P}_{\mkern-10mu\Mr}}
\newcommand{\PMrii}{\mathscr{P}_{\mkern-10mu\Mrii}}
\newcommand{\PMp}{\mathscr{P}^\perp_{\mkern-10mu\Mr}}
\newcommand{\BigO}{\mathcal{O}}
\newcommand{\Ret}{\mathcal{R}}
\newcommand{\Rr}{\mathds{R}_*^{n\times r}}
\newcommand{\Rrr}{\mathds{R}_*^{r\times r}}
\newcommand{\Ud}{\skew{5}{\Dot}{\mathpzc{U}}}
\newcommand{\Zd}{\Dot{\mathcal{Z}}}
\newcommand{\Xf}{\mathfrak{X}} %
\newcommand{\Xeither}{\mathscr{X}}
\DeclareMathOperator*{\argmin}{arg\,min}
\newcommand*{\addFileDependency}[1]{%
  \typeout{(#1)}%
  \@addtofilelist{#1}%
  \IfFileExists{#1}{}{\typeout{No file #1.}}%
}
\newcommand*{\myexternaldocument}[1]{%
    \externaldocument{#1}%
    \addFileDependency{#1.tex}%
    \addFileDependency{#1.aux}%
}
\begin{document}

\maketitle

\begin{abstract}
We develop two new sets of stable, rank-adaptive Dynamically Orthogonal Runge-Kutta (DORK) schemes that capture the high-order curvature of the nonlinear low-rank manifold. The DORK schemes asymptotically approximate the truncated singular value decomposition at a greatly reduced cost while preserving mode continuity using newly derived retractions.
We show that arbitrarily high-order optimal perturbative retractions can be obtained, and we prove that these new retractions are stable. In addition, we demonstrate that repeatedly applying retractions yields a gradient-descent algorithm on the low-rank manifold that converges superlinearly when approximating a low-rank matrix.
When approximating a higher-rank matrix, iterations converge linearly to the best low-rank approximation. We then develop a rank-adaptive retraction that is robust to overapproximation. Building off of these retractions, we derive two rank-adaptive integration schemes that dynamically update the subspace upon which the system dynamics are projected within each time step: the 
stable, optimal Dynamically Orthogonal Runge-Kutta
(so-DORK) and 
gradient-descent Dynamically Orthogonal Runge-Kutta
(gd-DORK) schemes. These integration schemes are numerically evaluated and compared on an ill-conditioned matrix differential equation, an advection-diffusion partial differential equation, and a nonlinear, stochastic reaction-diffusion partial differential equation. Results show a reduced error accumulation rate with the new stable, optimal and gradient-descent integrators. In addition, we find that rank adaptation allows for highly accurate solutions while preserving computational efficiency.

\end{abstract}

\begin{keywords}
  Dynamical low-rank approximation, DO equations, matrix differential equations, stochastic differential equations, uncertainty quantification, reduced-order modeling, differential geometry, retraction, fixed-rank matrix manifold
\end{keywords}

\begin{AMS}
  54C15, 65F55, 53B21, 15A23, 57Z05, 57Z20, 57Z25, 60G60, 65C30, 65M12, 65M22, 65C20, 81S22, 94A08, 53A07, 35R60
  
\end{AMS}

\section{Introduction}
\label{sec:intro}

Scientific computing often involves numerical simulations which overwhelm available computational resources. A common reduced-order modeling technique is to use low-rank approximations of matrices and/or tensors to improve computational efficiency and reduce storage requirements. Mathematically, we seek $X(t)$, a low-rank approximation to our full state $\Xf(t)$, such that at all discrete times $t_i$, $X(t_i)$ is the best approximation to $\Xf(t_i)$. That is, 
\begin{gather*}
    X(t_i) = \argmin_{\tilde{X}(t_i)\in \Mr} ||\tilde{X}(t_i)-\Xf(t_i)|| \quad \forall t_i,
\end{gather*}
where $\Mr$ denotes the manifold of rank-$r$ matrices (see Table\;\ref{tab:notation} for key notation).

The dynamical low-rank approximation (DLRA) \cite{dlra} is a method to instantaneously optimally evolve a system's low-rank approximation for common time-dependent partial differential equations (PDEs) \cite{feppon_lermusiaux_SIMAX2018a}. Suppose we are given a discretized dynamical system, stochastic or deterministic, as a first-order differential equation
\begin{gather}
    \frac{d \Xf}{dt} = \LL(\Xf,t;\omega) \label{eq:fullRankSystem},
\end{gather}
where $\omega \in \Omega$ denotes a simple event in a stochastic event space. 
Note that any order differential equation may be rewritten as a first-order system, so this is not at all restrictive (see, e.g., \cite{hochbruckdynamical}). There are two common approaches to solve for $X$. The first is to derive an instantaneously optimal differential equation for $X$ in terms of $\LL$ given that $X$ is restricted to the low-rank manifold. Using the Dirac-Frenkel time-dependent variational principle (see Table \ref{tab:keyExpressions} for the incurred approximation error), we arrive at the following differential equation for our low-rank approximation.
\begin{gather}
    \frac{d X}{dt} = \PT [\LL(X,t;\omega)] \label{eq:tanSpaceDiffEq}
\end{gather}
For a derivation, we refer to \cite{dlra,feppon_lermusiaux_SIMAX2018a,charous_MSThesis2021,charous_lermusiaux_SJSC2023a}. Next, a parameterization of the low-rank matrix $X$ is chosen; two common parameterizations are $X = UZ^T$ with orthonormal modes $U\in \Stmr$ and coefficients $Z \in \Rr$ as in \cite{feppon_lermusiaux_SIMAX2018a,feppon_lermusiaux_SIREV2018,charous_lermusiaux_Oceans2021,charous_lermusiaux_SJSC2023a}, or $X = USV^T$ similar to a classic singular value decomposition (SVD) with $U \in \Stmr$, $S \in \Rrr$, and $V \in \Stnr$ as in \cite{dlra,projSplit,lermusiaux_robinson_MWR1999,lermusiaux_MWR1999,lermusiaux_JMS2001}.
With the parameterization, evolution equations for $U$ and $Z$ (or $U$, $S$, and $V$) are derived with a choice of gauge in order to eliminate redundant degrees of freedom and simplify the equations. 
For instance, the dynamically orthogonal (DO) condition insists that the rate-of-change $\Dot{U} \in \DOU$ (see table \ref{tab:notation}) which, from (\ref{eq:tanSpaceDiffEq}), yields the following equations.
\begin{gather}
\begin{aligned}
    &\Dot{U} = \PUp \LL(UZ^T,t;\omega) Z (Z^T Z)^{-1}\\
    &\Dot{Z} = \LL(UZ^T,t;\omega)^T U
\end{aligned} \label{eq:DOeqns}
\end{gather}
Alternatively, insisting that $\Dot{U} \in \TStmr$ and $\Dot{V} \in \TStnr$ yields the following differential equations for $U,S,$ and $V$.
\begin{gather}
\begin{aligned}
    &\Dot{U} = \PUp \LL(USV^T,t;\omega) V S^{-1}\\
    &\Dot{S} = U^T \LL(USV^T,t;\omega) V\\
    &\Dot{V} = \PVp \LL(USV^T,t;\omega)^T U S^{-T}
\end{aligned} \label{eq:DLRAeqns}
\end{gather}
Finally, different numerical schemes may be employed to solve (\ref{eq:DOeqns}) or (\ref{eq:DLRAeqns}). Numerical integration schemes include the projector-splitting integrator \cite{projSplit}, its closely related variant \cite{kslVariant}, and similar approaches based in local coordinates \cite{localCoord1,localCoord2}.

A second approach is to employ so-called projection methods (see, e.g., \cite{projectionMethods1,projectionMethods2}). Instead of numerically integrating $\PT \LL$, the full $\LL$ is integrated and then projected back onto the low-rank. 
Integrating (\ref{eq:fullRankSystem}) in time, we obtain
\begin{gather}
    \Xf_{i+1} = \Xf_{i} + \Delta t \LLo . \label{eq:intScheme}
\end{gather}
Here, $\LLo = \frac{1}{\Delta t}\int_{t_i}^{t_{i+1}} \LL(\Xeither(t),t;\omega) dt + \BigO(\Delta t ^k)$ is either the exactly integrated differential operator or the approximately integrated differential operator up to order $k$.
In the exact case, the integrand $\LL$ in $\LLo$ depends on the unknown future solution $\Xeither$;
furthermore, $\Xeither$ may be the full-rank system $\Xf$ or the low-rank system $X$, though the latter is another approximation inducing a dynamical model closure error \cite{charous_lermusiaux_SJSC2023a}, $\ed \equiv \LL(\Xf) - \LL(X)$. In defining $\LLo$ this way, (\ref{eq:intScheme}) may represent any chosen integration scheme. 
For implicit schemes, a system would need to be solved \cite{charous_PhDThesis2023,charous_lermusiaux_SJSC2023c} and only explicit schemes are considered here.
%
Letting $X_{i+1}^* \in \Mr$ denote the best possible approximation of $\Xf_{i+1}$ given information at time $t_i$, we have
\begin{gather*}
    X_{i+1}^* = \PM(\Xf_i + \Delta t \LLo).
\end{gather*}
Of course, we may not have access to $\Xf_i$; we substitute in $X_i \in \Mr$ instead as an approximation.
\begin{gather}
    X_{i+1}^* = \PM(X_i + \Delta t \LLo) \label{eq:bestProjMethod}
\end{gather}
Unfortunately, $\PM$, i.e.\ the truncated SVD, is expensive to compute every time step \cite[p. 237]{trefethen1997numerical}. As such, we seek approximations to the exact projection operator. With the use of (extended) retractions, which map matrices from the embedding Euclidean space back to the manifold, see \cite{retSurvey,charous_lermusiaux_SJSC2023a}, the numerical scheme then becomes
\begin{gather}
    X_{i+1} = \Ret_{X_i}(\Delta t \LLo) \label{eq:projectionMethod},
\end{gather}
where $X_{i+1}$ is no longer the \emph{best} possible approximation given information at time $t_i$. The difference between (\ref{eq:bestProjMethod}) and (\ref{eq:projectionMethod}) defines the \emph{projection-retraction error} \cite{charous_lermusiaux_SJSC2023a},
\begin{gather}
    \epr \equiv \Ret_{X_i}(\Delta t \LLo) - \PM(X_i + \Delta t \LLo). \label{eq:projRetError}
\end{gather}

Using both an integration scheme for $\LLo$ and a retraction of order $k$, whereby $\epr = \BigO(\Delta t^{k+1})$, guarantees $k$th-order convergence to the best low-rank approximation assuming that the difference between $X\in\Mr$ and the full-rank system $\Xf$ is small. Integration schemes that use projection methods include those presented in \cite{charous_lermusiaux_SJSC2023a} as well as projected Runge-Kutta methods \cite{rkRets}, which actually use both formulations of the problem. The use of retractions and projection methods is general and may be applied to any integration scheme for $\LLo$. 

To wrap up, we note the difference in the derivation of \say{tangent-space-integration methods,} i.e.\ schemes that integrate (\ref{eq:tanSpaceDiffEq}) (e.g.\ \cite{projSplit,kslVariant,localCoord1,localCoord2}), and projection methods of the form (\ref{eq:projectionMethod}) (e.g.\ \cite{projectionMethods1,projectionMethods2}). To derive (\ref{eq:tanSpaceDiffEq}), the Dirac-Frenkel time-dependent variational principle \cite{dirac,lubichbook} is applied in the continuous time setting, and then time is discretized for numerical integration. 
For projection methods, there are two approaches, both of which start by discretizing time for $\Xeither$ before making any approximations.
The first approach is to integrate the full-rank differential operator $\LL(\Xf(t),t)$ in the embedding Euclidean space. Then, $\Xf$ is approximated as $X$ as in (\ref{eq:bestProjMethod}), and the projection operator is approximated with a retraction as in (\ref{eq:projectionMethod}). 
The second approach is to apply an integration scheme to $\LL(X(t),t)$; here we integrate along the manifold, so retractions may be applied as we compute $\LLo$. At the last stage, a retraction is applied as in (\ref{eq:projectionMethod}). So, the difference between the tangent-space-integration and projection methods comes from the order of time discretization for $\Xeither$ and approximation, which manifests in whether or not the tangent-space projection operator $\PT$ is explicitly integrated. Nevertheless, both schemes form consistent integrators.
\begin{table}
\begin{threeparttable}

\caption*{\small \textbf{Notation}}
\small
\begin{tabular}{ll}
$\omega$ & Simple event\\
$\Omega$ & Event space\\
$\Xf$ & Full-rank system state in $\mathds{R}^{m\times n}$\\
$X$ & Low-rank approximation of system state in $\Mr$\\
$\Xeither$ & Either the full-rank $\Xf$ or low-rank $X$ system state\\
$\newPoint$ & Next system state point before retraction, $U_iZ_i^T + \Delta t \LLo$ \\
$\Uo$ & Time-averaged change in $U$ integrated from $t_{i}$ to $t_{i+1}$ such that $U_{i+1} = U_i + \Delta t \Uo$\\
$\Zo$ & Time-averaged change in $Z$ integrated from $t_{i}$ to $t_{i+1}$ such that $Z_{i+1} = Z_i + \Delta t \Zo$\\
$\|\bullet \|$ & Frobenius norm\\
$\Mr$ & Manifold of rank-r $m\times n$ real matrices\\
$\LLo$ & Approximately or exactly integrated differential operator defined as \\ &  $\frac{1}{\Delta t}\int_{t_i}^{t_{i+1}} \LL(\Xeither(t),t) dt + \BigO(\Delta t ^k)$\\
$\PM$ & Projection onto $\Mr$\\
$\PMp$ & Orthogonal complement of projection onto $\Mr$ defined by $I - \PM$\\
$\TX$ & Tangent space of low-rank manifold at point $X$\\
$\PT$ & Projection operator onto tangent space of low-rank manifold at point $X$\\
$\Stmr$ & Stiefel manifold of $m\times r$ matrices\\
$\Stnr$ & Stiefel manifold of $n\times r$ matrices\\
$\Rr$ & Set of rank-r matrices in $\mathds{R}^{n\times r}$\\
$\Rrr$ & Set of rank-r matrices in $\mathds{R}^{r\times r}$\\
$\DOU$ & DO space (which depends on $U$) defined as $\left\{\du \in \mathds{R}^{m\times r}: U^T\du = 0\right\}$\\
$so(r)$ & Space of skew-symmetric, real $r\times r$ matrices\\
$\TStmr$ & Tangent space of $m\times r$ Stiefel manifold equivalent to $\left\{\du \in \mathds{R}^{m \times r}: U^T\du \in so(r)\right\}$\\
$\TStnr$ & Tangent space of $n\times r$ Stiefel manifold equivalent to $\left\{\dv \in \mathds{R}^{m \times r}: V^T\dv \in so(r)\right\}$\\
$\PU$ & Projection onto modes $U$ defined by $UU^T$\\
$\PV$ & Projection onto modes $V$ defined by $VV^T$\\
$\PUp$ & Orthogonal complement of projection onto modes of $U$ defined by $(I-UU^T)$\\
$\PVp$ & Orthogonal complement of projection onto modes of $V$ defined by $(I-VV^T)$\\ 
$\Ret_{A}(\Delta t B)$ & Retraction from point $A$ in direction $B$ such that $\Ret_{A}(\Delta tB) = A + \Delta t \PTA B + \BigO(\Delta t^2)$\\
$\Ret^{(p)}$ & Retraction that asymptotically approximates $\PM$ to the $p^{\rm th}$ order. 
\end{tabular}
\end{threeparttable}
\caption{\small 
Relevant mathematical structures and operators referenced within the text.
}
\label{tab:notation}       
\end{table}

Progress has been made recently in developing tangent-space-integration methods \cite{projSplit,kslVariant,localCoord1,localCoord2,hochbruckdynamical}. Arbitrarily high-order integration schemes may be derived via symmetric composition with the adjoint method. However, this integrator may lack desirable properties of other schemes for particular physical systems. 
For instance, one may seek a symplectic integrator \cite{musharbash2020symplectic} for Hamiltonian systems, a total-variation diminishing scheme \cite{gottlieb1998total} to avoid oscillations in, e.g., shock propagation problems, schemes that conserve relevant physical quantities, or implicit-explicit schemes to preserve stability. This has been studied in model-order reduction \cite{celledoni2002class,zimmermann2018geometric} and recently in the context of the DLRA \cite{pagliantini2021dynamical,hesthaven2022rank}. Projection methods allow easy construction of new low-rank integration schemes with our desired features simply by implementing pre-existing full-rank schemes with the addendum of applying a high-order retraction as a final step to project back to the low-rank manifold. One may also use projections/retractions after any given step of a full-rank integration scheme in order to limit the quickly growing rank of $\LLo$ as is done with projected Runge-Kutta methods in \cite{rkRets}. Without a particular choice of gauge, all of these schemes lack mode continuity \cite{ueckermann_et_al_JCP2013,feppon_lermusiaux_SIREV2018,lin_lermusiaux_NM2021}, i.e.\ the vectors in $U$ should only change slowly as a function of time so that their dynamics are interpretable. Such a feature is essential for the reduced-order evolution of dynamical systems in time, especially for uncertainty quantification. As such, we require efficient and accurate retractions which asymptotically approximate the projection operator and preserve mode continuity as a tool to construct flexible integration schemes.

In \cite{charous_lermusiaux_SJSC2023a}, we developed a new set of perturbative retractions that approximate the truncated SVD. From these retractions, we created Dynamically Orthogonal Runge-Kutta (DORK) integration schemes, which updated the subspace upon which we projected the system dynamics as we integrated, yielding highly accurate schemes. Nevertheless, integration using these perturbative retractions can have three issues: (i) they can lead to overshooting if time steps are too large, (ii) their higher-order corrections become more expensive as greater accuracy is required, and (iii) they can break down in the presence of small singular values. The tangent-space-integration methods mentioned are robust to small singular values, which enable the recent development of rank-adaptive methods. In \cite{ceruti2022rank}, the rank of the solution is doubled during the integration of each time step, the solution is integrated, and then the rank is reduced. While this algorithm is effective, it has the computational cost of always computing a solution at double the rank, which is undesirable. There are a few potential algorithmic remedies in the literature to reduce the computational cost by determining when the rank should be increased. Rank-adaptive algorithms proposed in \cite{gao2022riemannian,lin_PhDThesis2020} recommend calculating the norm of the residual, namely $\|(I-\PT)\LLo\|$, at each time step; if this value is large, the rank should be augmented. Similarly, \cite{zhou2016riemannian} recommend using the tangent of the angle between $\PT \LLo$ and $\LLo$ as a metric. Rank-adaptive algorithms are also developed in the context of dynamically orthogonal differential equations in \cite{sapsis_lermusiaux_PHYSD2012,feppon_lermusiaux_SIREV2018}. However, the perturbative retractions and DORK schemes in \cite{charous_lermusiaux_SJSC2023a} are incompatible with rank adaptation because the rank augmentation necessitates the integration of small singular values which will lead to the inversion of an ill-conditioned correlation matrix $Z^TZ$.

\begin{table}
\begin{threeparttable}

\caption*{\small \textbf{Key expressions}}
\small
\begin{tabular}{ll}
$\{\Ud_j^{\text{orig}}, \Zd_j^{\text{orig}}\}$ & Original first- through fourth-order corrections, see (5.4-5.7) in \cite{charous_lermusiaux_SJSC2023a}\\
$\{\Ud_j\}_{j=1}^4$ & Optimal first- through fourth-order corrections (\ref{eq:corr1}-\ref{eq:corr4})\\
$U_{i+1}^{\text{robust}}$ & Optimal, robust first-order update to $U_i$ (\ref{eq:uUpdateRobust}) \\
$Z_{i+1}^{\text{optimal}}$ & Optimal update to $Z_i$ (\ref{eq:zUpdate})\\
$\{\Ud_j, \Ld_j\}_{i=1}^4$ & First- through fourth-order so-DORK schemes (\ref{eq:corr1_so-DORK}-\ref{eq:corr4_so-DORK})\\
$\epr$ & Projection-retraction error defined as $\Ret_{X}(\Delta t \LLo) - \PM(X + \Delta t \LLo)$\\
$\el$ & Local retraction error defined as $\Ret_{X}(\Delta t \LLo) - (X + \Delta t \LLo)$\\
$\en$ & Normal closure error defined as 
$\PMp(\Xf)= 
\Xf - \PM(\Xf)$\\
$\ed$ & Dynamical model closure error defined as $\LL(\Xf) - \LL(X)$\\
$\edf$ & Dirac-Frenkel model closure error defined as $\LL(\Xf) - \PT \LL(X)$\\
$\et$ & Total error due to dynamical low-rank approximation defined as $\|\Xf - X\|$
\end{tabular}
\end{threeparttable}
\captionsetup{font=footnotesize,labelfont=footnotesize}
\caption{
Novel retractions derived and error metrics used throughout this paper. These retractions and error metrics arise due to the dynamic curvature of the low-rank manifold.
}
\label{tab:keyExpressions}       
\end{table}

In this paper, we remedy these three issues with a novel set of optimal perturbative retractions. In section \ref{sec:optPert}, we show these retractions are more accurate than those previously introduced at a reduced computational cost and, importantly, largely avoid overshoot. In section \ref{sec:gradDescent} we introduce a new simple yet effective set of retractions composed of perturbative retractions, which we call \say{gradient-descent retractions.} Gradient-descent retractions yield exceptional accuracy up to arbitrary order without a quickly growing computational cost. We also develop a simple \say{automatic} gradient-descent retraction that determines how many iterations to compute based on a user-given tolerance. 
In section \ref{sec:rankAdaptive}, we develop perturbative retractions robust to small singular values that may be coupled with our gradient-descent retractions, and we include rank adaptation as a feature. 
Then, in section \ref{sec:newDORK} we derive two new low-rank integration schemes based off our new retractions, 
the stable, optimal Dynamically Orthogonal Runge-Kutta (so-DORK) schemes and 
gradient-descent Dynamically Orthogonal Runge-Kutta
(gd-DORK) schemes. 
We demonstrate the efficacy of our retractions and integrators with numerical experiments on a matrix differential equation, a linear partial differential equation, and a nonlinear, stochastic partial differential equation in section \ref{sec:numExp}. Finally, we conclude in section \ref{sec:conc} with some remarks on the utility of these new retractions and integrators, and we discuss future research directions. Throughout the manuscript, we use the terms \say{high-order retraction} or $p$th-order retraction to denote retractions that have projection-retraction error (see table \ref{tab:keyExpressions}) of order $\BigO(\Delta t^{p+1})$, which is in contrast to a \say{classical} second-order retraction, whose second-order error belongs to the normal space of $\Mr$.

\section{Optimal perturbative retractions}
\label{sec:optPert}
\subsection{Derivation}
In this section, we will derive \say{optimal} perturbative retractions for explicit integration schemes, an improvement over the perturbative retractions developed in \cite{charous_lermusiaux_SJSC2023a}. In the original set of perturbative retractions, we approximately solved the following optimization problem.
\begin{equation}
\begin{aligned}
    &\argmin_{U_{i+1}, Z_{i+1}} \left\|U_iZ_i^T + \Delta t \LLo - U_{i+1}Z_{i+1}^T\right\| \label{eq:opt1}\\
    &\phantom{a}\text{st}\phantom{a}U_{i+1} = U_{i} + \Delta t \Uo, \quad \Uo \in \DOU\\
    &\phantom{asta} Z_{i+1} = Z_i + \Delta t \Zo
\end{aligned}
\end{equation}
Solving for the first-order optimality conditions of the system yields nonlinear matrix equations, but writing $\Uo$ and $\Zo$ as perturbation series in $\Delta t$ yields linear equations to solve, giving our retractions. For notational convenience, we introduce $\newPoint$, 
\begin{equation*}
\newPoint \equiv U_iZ_i^T + \Delta t \LLo\,,
\end{equation*}
which denotes the next point we wish to approximate before we retract back to the low-rank manifold (see table \ref{tab:notation}).
Next, we modify the optimization problem (\ref{eq:opt1}) by solving for $Z_{i+1}$ directly and enforcing numerical orthonormality. 

First, suppose that $U_{i+1} \in \Stmr$ is given a priori. In a manner similar to an alternating least squares procedure \cite{ALS}, we can exactly solve for $Z_{i+1}$ in the following unconstrained optimization problem.
\begin{equation}
    \argmin_{Z_{i+1}} \left\|\newPoint - U_{i+1}Z_{i+1}^T\right\| \quad \Rightarrow \quad Z_{i+1} = \newPoint^TU_{i+1} \label{eq:zeq}
\end{equation}

Second, we now would like to substitute this expression for $Z_{i+1}$ in as a constraint in (\ref{eq:opt1}) to yield a more accurate set of retractions. But, we must first recognize that despite the DO condition, $U_i + \Delta t \Uo$ is not exactly orthonormal: $(U_i + \Delta t \Uo)^T(U_i + \Delta t \Uo) = I + \BigO(\Delta t^2)$. So we cannot simply write $U_{i+1} = U_i + \Delta t \Uo$. A re-orthonormalization procedure is typically applied; see \cite{lin_lermusiaux_NM2021} for a fast algorithm. This re-orthonormalization amounts to right-multiplying $U_i + \Delta t \Uo$ by some nonsingular $r\times r$ matrix, $\Reo$, such that $\Reo^T(U_i + \Delta t \Uo)^T (U_i + \Delta t \Uo)\Reo = I$. Then, we may let $U_{i+1} = (U_i + \Delta t \Uo)\Reo$, and we can rewrite our optimization problem as follows.

\begin{equation}
\begin{aligned}
    &\argmin_{U_{i+1}} \left\|\newPoint - U_{i+1}Z_{i+1}^T\right\| \label{eq:opt2}\\
    &\hspace{0.75ex}\text{st}\phantom{a}U_{i+1} = (U_{i} + \Delta t \Uo)\Reo, \quad \Uo \in \DOU\\
    &\hspace{0.75ex}\phantom{sta} Z_{i+1} = \newPoint^TU_{i+1},\quad \Reo\in\Rrr
\end{aligned}
\end{equation}
With this formulation, we are now only optimizing over one matrix, $U_{i+1}$ (or $\Uo$, equivalently). Additionally, we are ensuring that whatever matrix $U_{i+1}$ we end up with, $Z_{i+1}$ in (\ref{eq:zeq}) gives the \emph{optimal} low-rank approximation to $\newPoint$. We remark that the update equation for the coefficients is equivalent to that used in proper orthogonal decomposition \cite{pod1,pod2} and other reduced-basis methods such as dynamic mode decomposition \cite{dmd1,dmd2,dmd3,dmd4}.

By substituting our constraints into the cost function in (\ref{eq:opt2}) and differentiating with respect to $U_{i+1} = (U_i + \Delta t \Uo)G$, the first-order optimality condition can be simplified to the following nonlinear matrix equation we seek to solve.
\begin{gather*}
    \frac{\partial}{\partial [(U_i + \Delta t \Uo)G]} \left[\left\| \newPoint - (U_i + \Delta t \Uo)GG^T(U_i + \Delta t \Uo)^T\newPoint\right\|^2\right] = 0 \\
    \Leftrightarrow \left[I-(U_i + \Delta t \Uo)\Reo\Reo^T(U_i + \Delta t \Uo)^T\right]\newPoint\newPoint^T(U_i + \Delta t \Uo) = 0
\end{gather*}
We may remove the $\Reo\Reo^T$ dependence by using the definition of $G$ to obtain
\begin{align*}
    \Reo\Reo^T &= \left[(U_i + \Delta t \Uo)^T(U_i + \Delta t \Uo)\right]^{-1}\\
    &= \left[I + \Delta t^2 \Uo^T\Uo\right]^{-1}\\
    &= I - \Delta t^2 \Uo^T\Uo + \Delta t^4 (\Uo^T\Uo)^2 - \cdots .
\end{align*}
The last line follows from the Neumann series of the inverse of $I + A$ for some matrix $A$, which converges assuming that $\Delta t^2 \|\Uo^T\Uo\| < 1$; this is not a restrictive assumption since $\Delta t$ can always be chosen to be smaller such that this holds, and this is further remedied by our robust retractions in section \ref{sec:rankAdaptive}. Note that since $\Reo$ has been eliminated from our equation, it need not be explicitly computed but will be computed implicitly when re-orthonormalizing $U_{i+1}$. Then our optimality condition becomes the following.
\begin{gather}
    \left[I-(U_i + \Delta t \Uo)(I - \Delta t^2 \Uo^T\Uo + \cdots)(U_i + \Delta t \Uo)^T\right]\newPoint\newPoint^T(U_i + \Delta t \Uo) = 0 \label{eq:optCondition}
\end{gather}
Now, we assume that $\Uo$ may be written as a perturbation series in $\Delta t$,
\begin{gather}
    \Delta t \Uo = \sum_{j=1}^\infty \Delta t^{j} \Ud_{j} . \label{eq:pertSeries}
\end{gather}
After substituting (\ref{eq:pertSeries}) into (\ref{eq:optCondition}), we may solve for $\Ud_j$ via small linear systems by grouping terms of the same order of $\Delta t$. We refer to \cite{charous_MSThesis2021, charous_lermusiaux_SJSC2023a} for an analogous derivation. Below, we provide first- through fourth-order corrections.
\begingroup
\allowdisplaybreaks
\begin{subequations}
\begin{align}
    &\Ud_1 = \PUnp \LLo Z_i (Z_i^TZ_i)^{-1} \label{eq:corr1}\\
    &\Ud_2 = \left[\PUnp\LLo\LLo^TU_i - \Ud_1\left(U_i^T \LLo Z_i +  Z_i^T \LLo^T U_i\right) \right](Z_i^TZ_i)^{-1} \label{eq:corr2}\\
    &\begin{aligned}
    &\Ud_3 = \left[\PUnp \LLo \LLo^T \Ud_1 -
    \Ud_2 \left(U_i^T \LLo Z_i +  Z_i^T \LLo^T U_i\right) \right.\\&\phantom{\Ud_4}-
    \Ud_1\left. \left(U_i^T\LLo\LLo^T U_i + \Ud_1^T\LLo Z_i + Z_i^T\LLo^T\Ud_1 - \Ud_1^T\Ud_1 Z_i^T Z_i  \right)\right](Z_i^T Z_i)^{-1} \label{eq:corr3}
    \end{aligned}\\
    &\Ud_4 = \left[\PUnp\LLo\LLo^T\Ud_2 - \Ud_3\left(U_i^T \LLo Z_i +  Z_i^T \LLo^T U_i\right) \right. \notag \\ &\phantom{\Ud_4}\begin{aligned}
        &-\Ud_2\left(U_i^T\LLo\LLo^T U_i + \Ud_1^T\LLo Z_i + Z_i^T\LLo^T\Ud_1 - \Ud_1^T\Ud_1 Z_i^T Z_i  \right)  \\
        &-\Ud_1\left(U_i^T\LLo\LLo^T\Ud_1 + \Ud_1\LLo\LLo^T U_i + \Ud_2^T\LLo Z_i + Z_i^T\LLo^T\Ud_2 - \Ud_1^T\Ud_2 Z_i^TZ_i  \right. \end{aligned} \label{eq:corr4}\\ 
    &\phantom{\Ud_4} - \left. \left. \Ud_2^T\Ud_1 Z_i^T Z_i - \Ud_1^T\Ud_1 U_i^T\LLo Z_i - \Ud_1^T\Ud_1Z_i^T\LLo^TU_i \right)\right](Z_i^T Z_i)^{-1} \notag  
\end{align}
\label{eq:retract_cor}
\end{subequations}
\endgroup
We note here that there are two patterns to exploit for computational efficiency in (\ref{eq:corr1}-\ref{eq:corr4}). First, the high-order corrections require the lower-order corrections, and each term on the right-hand side that is left-multiplied by $\Ud_j$ is repeated in the higher-order corrections. So, the repeated terms should be stored to avoid extra matrix multiplications. Second, several of the terms above are transposes of each other, so again, storage will avoid redundant computation.

For concreteness, we explicitly write out a second-order retraction with our optimal perturbative retraction.
\begin{equation}
\begin{gathered}
    X_{i+1} = \Ret^{\text{opt-2}}_{X_i}(\Delta t \LLo) = U_{i+1}Z_{i+1}^T\\
    U_{i+1} = \texttt{orth}\left(U_i + \Delta t \Ud_1 + \Delta t^2 \Ud_2 \right),
    \qquad Z_{i+1} = (X_i + \Delta t \LLo)^T U_{i+1}
\end{gathered} \label{eq:second_order_optimal}
\end{equation}
The first- and second-order corrections, $\Ud_1$ and $\Ud_2$, respectively, are given by equations (\ref{eq:corr1}) and (\ref{eq:corr2}), and the orthonormalization procedure --- which implicitly generates the $G$ matrix referenced beforehand --- may be a QR decomposition or other fast algorithm as in \cite{lin_lermusiaux_NM2021}.

It is instructive to derive the \emph{local retraction error}, $\el$, of (i) any generic retraction and (ii) our optimal pertubative retractions in (\ref{eq:corr1}-\ref{eq:corr4}). From the generic definition of a retraction in Table \ref{tab:notation}, any local retraction error may be written as
\begin{align*}
    \el &\equiv  X_i + \Delta t \LLo - \Ret_{X_i}(\Delta t \LLo)\\
    &= U_iZ_i^T + \Delta t \LLo - \left(U_iZ_i^T + \Delta t \PTn\LLo + \BigO(\Delta t^2)\right)\\
    &= \left(I - \PTn\right)\Delta t \LLo + \BigO(\Delta t^2).
\end{align*}
Explicitly substituting in our expression for $\PTn$ (see \cite{feppon_lermusiaux_SIMAX2018a}), we obtain
\begin{align}
    \el &= (I-U_iU_i^T)\Delta t \LLo (I - Z_i(Z_i^T Z_i)^{-1}Z_i^T) + \BigO(\Delta t^2) \label{eq:elGeneral}
\end{align}

For our optimal perturbative retractions, we may write the local retraction error as
\begin{align*}
    \el^{\text{opt}} = U_iZ_i^T + \Delta t \LLo - U_{i+1}Z_{i+1}^T,
\end{align*}
noting that $U_{i+1}Z_{i+1}^T = \Ret^{\text{opt}}_{X_i}(\Delta t \LLo)$ by definition. Now, we substitute in our expression for $Z_{i+1}$ from equation (\ref{eq:zeq}).
\begin{align}
    \el^{\text{opt}} &= U_iZ_i^T + \Delta t \LLo - U_{i+1}U_{i+1}^T(U_iZ_i^T + \Delta t \LLo) \nonumber\\
    &= (I - U_{i+1}U_{i+1}^T)\newPoint \label{eq:elOptimal}
\end{align}
Contrasting the generic retraction error (\ref{eq:elGeneral}) with the optimal perturbative local retraction error (\ref{eq:elOptimal}), we see that our particular choice of $Z_{i+1}$ has allowed us to write a cleaner, more interpretable form of $\el$. By insisting that $Z_{i+1}$ is optimal given $U_{i+1}$, our error is given as the $U_{i+1}$ orthogonal projection of our new point. In words, our error is given by the information not captured by the span of the to-be-computed $U_{i+1}$. The best $U_{i+1}$ would then be the $r$ left singular vectors of $\chi$ with the largest singular values, but, of course, this would be expensive to compute. Because the truncated SVD is the exact solution to (\ref{eq:opt2}) which our retractions approximately solve, the left singular vectors are precisely what our perturbation series asymptotically approaches.

Lastly, we consider the stability of the retractions.
\begin{lemma}
Let $X_{i+1} = \Ret^{\text{opt}}_{X_i}(\Delta t \LLo)$ where we use the optimal perturbative retractions given by equations (\ref{eq:corr1}-\ref{eq:corr4}) and (\ref{eq:zeq}). Then $\|X_{i+1}\| \leq \|X_i + \Delta t \LLo\|$. \label{lem:stability}
\end{lemma}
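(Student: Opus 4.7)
The plan is to exhibit $X_{i+1}$ explicitly as an orthogonal projection of $\newPoint = X_i + \Delta t \LLo$ and then invoke the well-known non-expansivity of orthogonal projections in the Frobenius norm.

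First I would use the construction of the optimal retraction to rewrite $X_{i+1}$. By (\ref{eq:zeq}) (or equivalently the constraint in (\ref{eq:opt2})) we have $Z_{i+1} = \newPoint^T U_{i+1}$, so
\begin{equation*}
X_{i+1} \;=\; U_{i+1} Z_{i+1}^T \;=\; U_{i+1} U_{i+1}^T \newPoint .
\end{equation*}
Next I would verify that $U_{i+1}$ has orthonormal columns. This is exactly the point of the re-orthonormalization step in the derivation: $U_{i+1} = (U_i + \Delta t \Uo) \Reo$ with $\Reo$ chosen so that $\Reo^T(U_i + \Delta t \Uo)^T(U_i + \Delta t \Uo) \Reo = I$, hence $U_{i+1}^T U_{i+1} = I_r$. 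Therefore $P := U_{i+1} U_{i+1}^T$ is the orthogonal projector onto the $r$-dimensional column space of $U_{i+1}$.

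Finally I would apply the standard fact that an orthogonal projector is non-expansive with respect to the Frobenius norm. Concretely, for any $A \in \mathds{R}^{m \times n}$ the Pythagorean identity
\begin{equation*}
\|A\|^2 \;=\; \|P A\|^2 + \|(I-P) A\|^2 \;\geq\; \|P A\|^2
\end{equation*}
gives $\|PA\| \leq \|A\|$. Taking $A = \newPoint$ yields
\begin{equation*}
\|X_{i+1}\| \;=\; \|P \newPoint\| \;\leq\; \|\newPoint\| \;=\; \|X_i + \Delta t \LLo\|,
\end{equation*}
which is the claim.

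The only subtlety — and the sole step that truly uses anything specific to these retractions — is the orthonormality of $U_{i+1}$; once that is established, the estimate reduces to a one-line projection argument and no computation involving the explicit corrections $\Ud_1,\ldots,\Ud_4$ is needed. Notably, this also means the bound holds uniformly for every truncation order of the perturbation series (\ref{eq:pertSeries}), provided the re-orthonormalization $\Reo$ is carried out.
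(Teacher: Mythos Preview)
Your proof is correct and essentially the same as the paper's. The paper argues $\|X_{i+1}\| = \|U_{i+1}Z_{i+1}^T\| = \|Z_{i+1}\| = \|\newPoint^T U_{i+1}\| \le \|\newPoint\|$ using orthonormality of $U_{i+1}$ directly, whereas you package the identical steps as $X_{i+1} = U_{i+1}U_{i+1}^T\newPoint = P\newPoint$ and invoke non-expansivity of the orthogonal projector; both rest on the same two ingredients, $U_{i+1}\in\Stmr$ and $Z_{i+1}=\newPoint^T U_{i+1}$.
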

%
%
\begin{proof}
Let $X_{i+1} = U_{i+1}Z_{i+1}^T$. 
Using the optimality (\ref{eq:zeq}), $X_{i+1} = U_{i+1}\, U_{i+1}^T (X_{i} + \Delta t \LLo)$.
By construction, $U_{i+1} \in \Stmr$, so $\|X_{i+1}\|  \leq \|X_{i} + \Delta t \LLo\|$.
\end{proof}
\qed
We remark that this stability property is unique to the \emph{optimal} perturbative retractions as it relies on the coefficients $Z_{i+1}$ being calculated as the projection of the next state onto a semi-orthonormal basis via equation (\ref{eq:zeq}). 
This is in contrast to the original perturbative retractions in which the $Z_{i+1}$'s had their own perturbation series.

\subsection{Illustration}
\label{sec:optimal-pert-ex}

We now exemplify the projection-retraction error of the optimal perturbative retractions (see eqs.\ (\ref{eq:pertSeries}, \ref{eq:retract_cor}) and Table \ref{tab:keyExpressions}). 
For comparison, we include the error from the original adaptive perturbative algorithm (using a hyperparameter $\varepsilon = 0.1$) from \cite{charous_lermusiaux_SJSC2023a} which automatically determines the best order of retraction to apply.
Overshoot typically occurs with the original high-order perturbative retractions
\cite{charous_lermusiaux_SJSC2023a} 
when $\Delta t$ is large because the perturbation series does not converge quickly. From another point of view, these high-order perturbations project the dynamics onto polynomial approximations of the low-rank manifold, but such polynomial approximations grow inaccurate for large steps. To avoid overshoot and ensure the perturbation series does not diverge catastrophically, the adaptive algorithm measures the norm of the high-order corrections and only includes terms that have a norm less than some user-given $\varepsilon$. Though both the original and optimal perturbative retractions orthonormalize the modes after each time step (and hence $\|U_{i+1}\| = 1$), only the optimal perturbative retractions prevent the coefficients $Z_{i+1}$ from growing too large. That is, our optimal retractions tend to avoid this overshoot (or at least limit it) due to their optimal choice of coefficients. 
This choice of coefficients is made by possible without pseudoinversion due to the re-orthonormalization procedure of $U_{i+1}$.

\begin{figure}
    \centering
    \includegraphics[width=.84\linewidth]{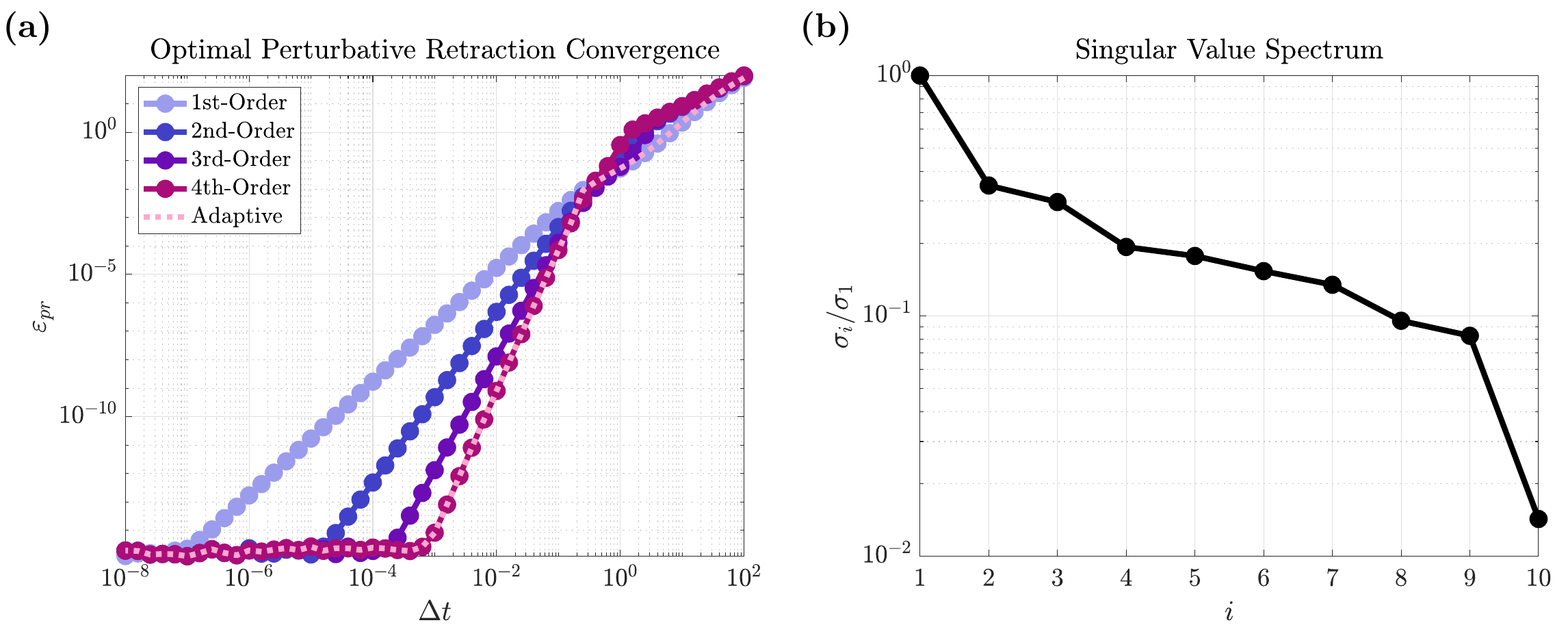}
    \captionsetup{font=footnotesize,labelfont=footnotesize}
    \caption{(a) Our optimal perturbative retractions (\ref{eq:retract_cor}), e.g.\;(\ref{eq:second_order_optimal}), exhibit high-order convergence to the best low-rank approximation. We see linear convergence at rates 2, 3, 4, and 5 in $\epr$ (Table \ref{tab:keyExpressions}). The adaptive algorithm selects which order correction to use by analyzing the convergence of the perturbation series.
    (b) The singular value spectrum of $X_0$, showing singular values span nearly two orders of magnitude. The condition number of $S_0^TS_0$ in the Frobenius norm is 4968.3.}
    \label{fig:optRetConvergence}
\end{figure}

The example problem of matrix addition is constructed as follows. We approximate $X_0 + \Delta t \LLo$ by forming our rank-10 starting point $X_0 \in \mathds{R}^{500\times 220}$; we choose uniformly randomly sampled matrices $U_0 \in \mathds{R}^{500\times 10}$ and $V_0 \in \mathds{R}^{220\times 10}$, and then we orthonormalize them. Then, we choose $S_0\in\mathds{R}^{10\times 10}$, also uniformly randomly sampled, and normalize $S_0$ such that its Frobenius norm is one. Finally, we set $X_0 = U_0S_0V_0^T$. Next, we form a rank-100 matrix $\LLo \in \mathds{R}^{500 \times 220}$ which is constructed by multiplying randomly, uniformly sampled matrices of size $500\times 100$ and $100\times 220$, and then we normalize $\LLo$ to have Frobenius norm one. Our reference solution then becomes $\PM(X_0 + \Delta t \LLo)$, and we compare that with $\Ret_{X_0}(\Delta t \LLo)$.

We illustrate the results in Figures \ref{fig:optRetConvergence} and \ref{fig:optvOriginal}.
We observe that the $k$th-order perturbative retraction exhibits linear convergence to the best low-rank approximation at order $k+1$, where the $+1$ comes from only analyzing the local error instead of the global error after $\BigO(1/\Delta t)$ steps of time integration. In addition, the optimal perturbative retractions are essentially always better than the original perturbative retractions.

\begin{figure}
     \centering
     \includegraphics[width=0.84\textwidth]{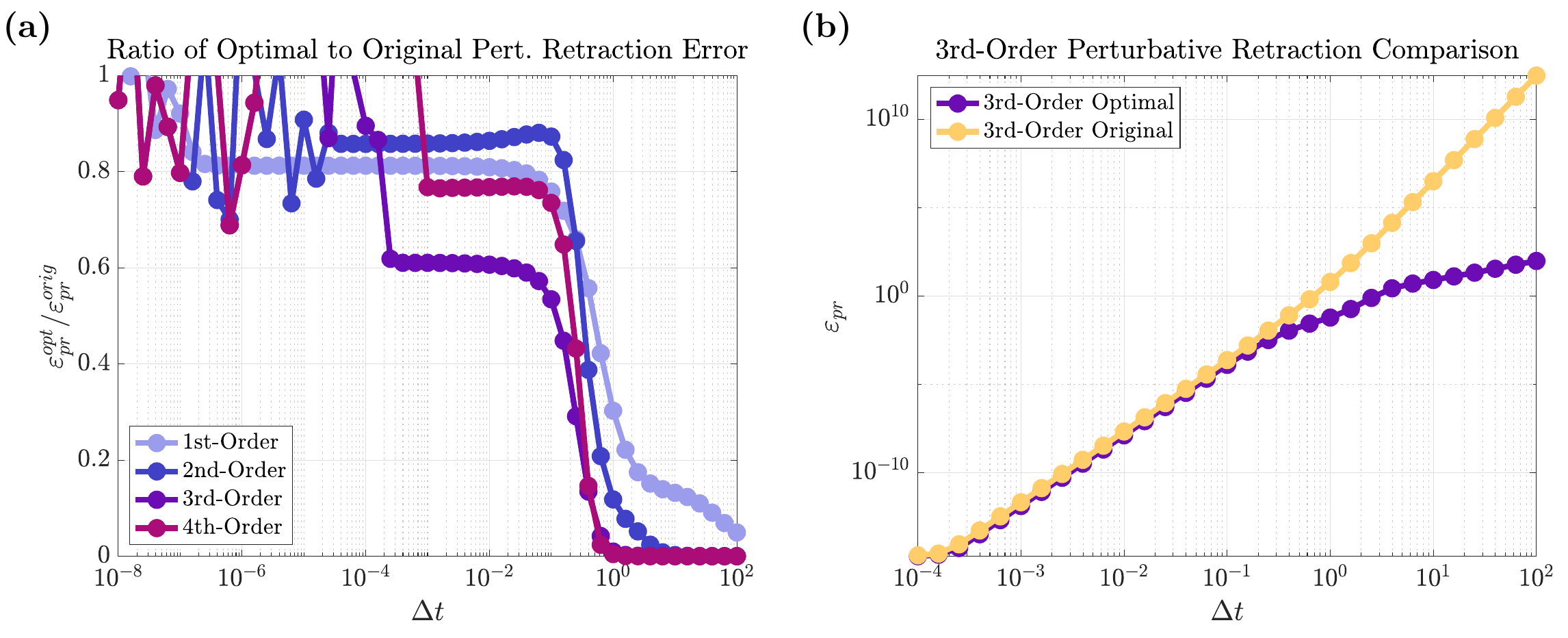}
     \captionsetup{font=footnotesize,labelfont=footnotesize}
     \caption{(a) Ratio of errors of the optimal (\ref{eq:retract_cor}), e.g.\;(\ref{eq:second_order_optimal}), and original \cite{charous_lermusiaux_SJSC2023a} perturbative retractions. The ratio is less than one, indicating the optimal retractions offer an improvement until $\Delta t$ becomes small enough that both retractions' errors are near machine zero, so the ratio fluctuates around one. (b) Explicit comparison of the errors between third-order retractions. The new optimal retraction (\ref{eq:corr3}) is always better, as expected, but performs especially well with large $\Delta t$ by mostly avoiding overshoot \cite{charous_lermusiaux_SJSC2023a}.}
     \label{fig:optvOriginal}
\end{figure}

\section{Gradient-descent retractions}
\label{sec:gradDescent}

\subsection{Derivation}
\label{sec:gd-derive}
In this section, we demonstrate how to accelerate convergence to the best low-rank approximation by iteratively applying pre-existing retractions. This methodology generates what we call \say{gradient-descent retractions,} and we show that if the retractions asymptotically approximate the exact projection operator, the gradient-descent retractions converge \emph{superlinearly} to a low-rank matrix $X \in \Mr$ and \emph{linearly} to the best low-rank approximation of a matrix $X \not \in \Mr$.

We begin by recalling the geometric intuition behind Newton's method for minimizing some cost function $J(x)$ in one dimension. By expanding $J(x_k + \Delta x)$ around an iterate $x_k$, we obtain a second-order Taylor series $J(x_k + \Delta x) \approx J(x_k) + \Delta x J'(x_k) + \frac{\Delta x^2}{2} J''(x_k)$. To minimize $J(x_k + \Delta x)$, we differentiate our expression with respect to $\Delta x$ and set it to zero, giving $\Delta x = -J''(x_k)^{-1}J'(x_k)$. In words, we minimized a second-order approximation of the cost function $J$. Alternatively, we note that the first-order optimality constraint is given by $J'(x) = 0$. Newton's method for solving this potentially nonlinear equation is given by Taylor expanding $J'$ in a similar fashion: $J'(x_k + \Delta x) \approx J'(x_k) + \Delta x J''(x_k)$. We set this first-order expression to zero as insisted by our optimality constraint, which yields the same expression for $\Delta x$. In this interpretation, we solve for the zero of a linear approximation of $J'$ \cite{lermusiaux_2.29_notes}.

Now, consider our optimization problem (\ref{eq:opt2}) with optimality condition (\ref{eq:optCondition}). 
By expanding $\Delta t \Uo$ as a perturbation series, implicitly, we are deriving the Taylor series of (\ref{eq:optCondition}).
We may interpret the first-order perturbative retraction as applying one Newton step. Furthermore, our higher-order perturbative retractions may be interpreted then as one step of higher-order gradient-descent iterations generalizing Newton's method. Denoting $\newPoint - X_i$ as the residual between the new point of arbitrary rank and our current point, this interpretation leads to a new iterative algorithm for improving these retractions: simply repeat the retractions applied on the residual, just as one iterates in Newton's method.
Such a gradient-descent retraction scheme is defined in algorithm \ref{alg:gradDescentRets}. It is also natural to consider a retraction where the number of iterations is determined automatically, iterating until some convergence criterion is met. We suggest measuring the Frobenius norm of the difference between iterates of $X_{i+1}$ normalized by the norm of $X_i$; when the norm of the difference drops below a pre-specified value/tolerance, denoted by hyperparameter $\Delta^*$, or we exceed a certain number iterations, denoted by $N_{\text{max}}$, we stop iterating. An automatic gradient-descent retraction is written out in algorithm \ref{alg:autoGradDescentRet}.

\begin{algorithm}
\caption{Fixed-order gradient-descent retraction}
\label{alg:gradDescentRets}
\begin{algorithmic}[1]
\REQUIRE{$X_i \in \Mr$, $\LLo \in \mathds{R}^{m\times n}$, $\Delta t \in \mathds{R}$, $N \in \mathds{Z}^+$}
\ENSURE{$X_{i+1} \in \Mr$}
\STATE{$X_{i+1} = X_i$}
\FOR{$j=1:N$}
\STATE{$X_{i+1} \leftarrow \Ret_{X_{i+1}}(\Delta t \LLo - (X_{i+1} - X_i))$} 
\ENDFOR
\end{algorithmic}
\end{algorithm}

\begin{algorithm}
\caption{Automatic gradient-descent retraction}
\label{alg:autoGradDescentRet}
\begin{algorithmic}[1]
\REQUIRE{$X_i \in \Mr$, $\LLo \in \mathds{R}^{m\times n}$, $\Delta t \in \mathds{R}$, $N_{\text{max}} \in \mathds{Z}^+$, $\Delta^* \in \mathds{R}$}
\ENSURE{$X_{i+1} \in \Mr$}
\STATE{$\alpha = \|X_i\|$}
\STATE{$X_{i+1} = X_i$}
\STATE{$X_{i+1} \leftarrow \Ret_{X_{i+1}}(\Delta t \LLo - (X_{i+1} - X_i))$}
\STATE{$\Delta = \|X_{i+1} - X_i\|$}
\STATE{$j = 1$}
\WHILE{$j < N_{\text{max}}$ and $\Delta/\alpha < \Delta^*$}
\STATE{$j \leftarrow j + 1$}
\STATE{$\hat{X} = X_{i+1}$}
\STATE{$X_{i+1} \leftarrow \Ret_{X_{i+1}}(\Delta t \LLo - (X_{i+1} - X_i))$}
\STATE{$\Delta = \|X_{i+1} - \hat{X}\|$}
\ENDWHILE
\end{algorithmic}
\end{algorithm}

Recall that in Algorithms \ref{alg:gradDescentRets} and \ref{alg:autoGradDescentRet}, whenever $X_{i+1}$ is updated via a retraction, $U_{i+1}$ and $Z_{i+1}$ are updated since $X_{i+1}$ is just shorthand for $U_{i+1}Z_{i+1}^T$. In fact, $X_{i+1}$ is never formed, and doing so may not even be possible on a memory-limited device. In our implementation, we find that using a first-order optimal perturbative retraction (\ref{eq:zeq}, \ref{eq:corr1}) or its robust variant (\ref{eq:zUpdate}, \ref{eq:uUpdateRobust}) (to be discussed in section \ref{sec:robust}) is efficient and extends stability from lemma \ref{lem:stability}, which will be shown next.

Finally, as we consider explicit schemes in this work, we remark that
$\LLo$ can depend on the past iteration points.
This approach is used with perturbations to derive DORK schemes in
section \ref{sec:newDORK}.
In general,
$\LLo$ may also be computed with iterative semi-implicit or implicit schemes and then depend on unknown or future points within $\Delta t$
\cite{charous_PhDThesis2023,charous_lermusiaux_SJSC2023c}.
\subsection{Analysis}
\label{sec:analysis}


First, we show that the gradient-descent retractions are stable provided that the inner retraction chosen ---- such as an optimal perturbative retraction --- is also stable.
\begin{lemma}
\label{lem:gd_stab}
Let $X_{i+1} = \Ret^{\text{gd}}_{X_i}\left(\Delta t \LLo \right)$ be the result of applying a gradient-descent retraction  from algorithm \ref{alg:gradDescentRets} or \ref{alg:autoGradDescentRet} using an optimal retraction $\Ret^{\text{opt}}$ given by equations (\ref{eq:corr1}-\ref{eq:corr4}) and (\ref{eq:zeq}) as the final inner retraction. Then $\|X_{i+1}\| \leq \|X_i + \Delta t \LLo\|$.
\end{lemma}
\begin{proof}
    Assume we perform $k$ inner iterations of gradient descent. From the last iteration of the gradient-descent retraction, we have that
    \begin{align*}
        \left\|X_{i+1}^{(k)}\right\| &= \left\|\Ret^{\text{opt}}_{X_{i+1}^{(k-1)}}\left(X_i + \Delta t \LLo - X_{i+1}^{(k-1)} \right)\right\|\\
        & \leq \|X_{i+1}^{(k-1)} + X_i + \Delta t \LLo - X_{i+1}^{(k-1)}\|\\
        &= \|X_i + \Delta t \LLo\|
    \end{align*}
    The inequality above follows from lemma \ref{lem:stability}.
\end{proof}
\qed
Hence using the stable, optimal retractions in our gradient-descent retractions retains our stability property. We emphasize that only the final inner retraction in the gradient descent iterations must be stable in order to preserve the overall stability of the retractions.

Next, we analyze the rate of convergence of Algorithm \ref{alg:gradDescentRets} by first assuming that the point we wish to approximate, $X_i + \Delta t \LLo$, is rank-$r$ and then analyzing the case where it is not, i.e., $X_i + \Delta t \LLo \notin \Mr$. Recall that throughout the text, we assume that $\LLo$ is computed explicitly, meaning it may depend on previous states including $X_{i-1}$ and $X_i$ but not unknown or future states such as $X_{i+1}$.
Letting $r_L$ denote the rank of $\LLo$, we remark that even if $X_i + \Delta t \LLo \in \Mr$, naively computing the sum (without retracting) will result in storing $U_{i+1} \in \mathds{R}^{m\times(r+r_L)}$ and $Z_{i+1} \in \mathds{R}^{n \times (r+r_L)}$ in contrast to $U_{i+1} \in \mathds{R}^{m\times r}$ and $Z_{i+1} \in \mathds{R}^{n \times r}$ after retracting back to $\Mr$. Algorithm \ref{alg:gradDescentRets} provides an efficient way to limit the growth of the rank of $X$ after each time step.

%

\begin{lemma}
If $X_i + \Delta t \LLo \in \Mr$, for a small enough $\Delta t$ such that (\ref{eq:suffCond}) holds, iteratively applying retractions that asymptotically approximate $\PM$ to the $p^{\rm th}$ order ($\epr = \BigO(\Delta t^p)$) yields rate-$p$ superlinear convergence to the fixed point $X_i + \Delta t \LLo$\,. \label{lem:geo}
\end{lemma}
\begin{proof}
We wish to approximate some point $X_i + \Delta t \LLo$, and we define an iteration of the following form.
\begin{gather}
    X^{(j)} = \Ret_{X^{(j-1)}}^{(p)}(X_i + \Delta t \LLo - X^{(j-1)}) \label{eq:iter}
\end{gather}
Above, $\Ret^{(p)}$ denotes a retraction that asymptotically approximates $\PM$ to the $p^{\rm th}$ order, and $j$ is our iteration index.
By definition, we then have that
\begin{gather}
    \Ret_{X^{(j-1)}}^{(p)}(X_i + \Delta t \LLo - X^{(j-1)}) = \PM(X_i + \Delta t \LLo) + \BigO\left(\left\|X_i + \Delta t \LLo - X^{(j-1)}\right\|^p\right). \label{eq:retproperty}
\end{gather}
Note that $\PM(X_i + \Delta t \LLo)$ is the fixed point. Let $\epr^{(j)} = X^{(j)} - \PM(X_i + \Delta t \LLo)$ denote the projection-retraction error at iteration $j$. We may write $\|\epr^{(j)}\|$ as a function of $\epr^{(j-1)}$ by combining (\ref{eq:iter}) and (\ref{eq:retproperty}).
\begin{align*}
    \left\|\epr^{(j)}\right\| &= \BigO\left(\left\|X_i + \Delta t \LLo - X^{(j-1)}\right\|^p\right) \\ 
    &= \BigO\left(\left\|\PM(X_i + \Delta t \LLo) - X^{(j-1)}\right\|^p\right) \nonumber\\
    &= \BigO\left(\left\|\epr^{(j-1)}\right\|^p\right)\nonumber
\end{align*}
Above, we have used the fact that $X_i + \Delta t \LLo \in \Mr$. Hence, we have shown the iteration converges superlinearly at rate $p$ assuming a close enough starting point is chosen such that the sequence converges.
\end{proof}
\qed
Note that a perturbative retraction of order $p-1$ may be used as $\Ret^{(p)}$ since it asymptotically approximates the solution to (\ref{eq:opt1}) and thus asymptotically approximates $\PM$ to the $p^{\rm th}$  order.

If $X_i + \Delta t \LLo \not \in \Mr$, as a first remark, we can expect to see at least linear convergence to the best low-rank approximation by iteratively applying smooth retractions; that is, assuming the iteration converges at all. To see why, consider a new function $g$.
\begin{gather*}
    g(X) \equiv \Ret_X(X_i + \Delta t \LLo - X)
\end{gather*}
Then, we may expand $g(X^{(j-1)})$ 
at $\PM\left(X_i + \Delta t \LLo\right)$ 
via Taylor series.
\begin{equation}
\begin{aligned}
    g(X^{(j-1)}) &= g\left(\PM\left(X_i + \Delta t \LLo\right)\right) \\ &+\left\langle \epr^{(j-1)}, g'\left(\PM \left(X_i + \Delta t \LLo \right)\right)\right\rangle + \BigO\left(\left\|\epr^{(j-1)}\right \|^2\right) \label{eq:linearProof}
\end{aligned}
\end{equation}
This assumes our retractions vary smoothly, which is generally valid except perhaps at singular value crossings (see \cite{feppon_lermusiaux_SIMAX2018a}). 
We note that $g(\PM(X_i + \Delta t \LLo)) = \PM(X_i + \Delta t \LLo)$, and the left-hand side of (\ref{eq:linearProof}) is equal to $X^{(j)}$. Subtracting $\PM(X_i + \Delta t \LLo)$ from both sides gives us $\epr^{(j)}$, and so we have the following.
\begin{equation*}
    \left\|\epr^{(j)}\right\| = \BigO\left(\left\|\epr^{(j-1)}\right\|\right)
\end{equation*}
We have shown that if the iteration converges, it at least converges linearly. 

To show the iteration converges using the above, we would need to show that $\|g'(\PM (X_i + \Delta t \LLo ))\| < 1$, but this is dependent upon the retraction chosen. However, we can instead derive a sufficient condition for convergence. Starting from (\ref{eq:retproperty}), again without assuming $X_i + \Delta t \LLo \in \Mr$, we have that $ \left\|\epr^{(j)}\right\| = \BigO\left(\left\|X_i + \Delta t \LLo - X^{(j-1)}\right\|^p\right)$ and thus
\begin{align*}
    \left\|\epr^{(j)}\right\| \leq \alpha \left\|X_i + \Delta t \LLo - X^{(j-1)}\right\|^p
\end{align*}
for some $\alpha \in \mathds{R}$. 
We can then break the error into different components, denoting the orthogonal complement of the projection operator $\PMp \equiv I - \PM$.
\begin{align*}
   \left\|\epr^{(j)}\right\| &\leq \alpha \left\|\PM\left(X_i + \Delta t \LLo\right) + \PMp\left(X_i + \Delta t \LLo\right) - X^{(j-1)}\right\|^p\\
     &= \alpha\left\|\PMp\left(X_i + \Delta t \LLo\right) - \epr^{(j-1)}\right\|^p\\
     &\leq \alpha \left(\left\|\epr^{(j-1)}\right\| + \left\|\PMp\left(X_i + \Delta t \LLo\right)\right\|\right)^p
\end{align*}
%
%
Recall that $\PMp\left(X_i + \Delta t \LLo\right)$ is related to the normal closure error $\en$, the distance between the best low-rank approximation and $X_i + \Delta t \LLo$. We make the assumption that this value is small, otherwise, the low-rank approximation would be inaccurate. So, it is reasonable to insist that 
%
%
\begin{gather}
    \left\|\PMp\left(X_i + \Delta t \LLo\right)\right\| < \left(\frac{\left\|\epr^{(j-1)}\right\|}{\alpha}\right)^{\frac{1}{p}} - \left\|\epr^{(j-1)} \right\| \label{eq:suffCond},
\end{gather}
where $\|\epr^{(j-1)}\|$ can also be made arbitrarily small by choosing a point sufficiently close to $\PM(X_i + \Delta t \LLo)$. This can easily be accomplished, no matter the dynamics $\LLo$, by refining the time step $\Delta t$. 
Hence, if (\ref{eq:suffCond}) is satisfied (e.g., $\en$ decays fast enough with $\Delta t$), our iteration converges since we then have shown $\|\epr^{(j)}\| < \|\epr^{(j-1)}\|$. 
We interpret this sufficient condition by considering the singular values of the quantities in (\ref{eq:suffCond}). 
Specifically, $\|\epr^{(j-1)}\| = \BigO(\sigma^{(j-1)}_1)$ where $\sigma^{(j-1)}_{1}$ denotes the largest singular value of $\epr^{(j-1)}$, 
i.e.\;the dominant dynamics not yet captured in $X^{(j-1)}$,
and $\|\PMp\left(X_i + \Delta t \LLo\right)\|=\BigO(\sigma^{(i+1)}_{r+1})$, where
$\sigma^{(i+1)}_{r+1}$ is the largest singular value of the projection of the dynamics over $\Delta t$ in the orthogonal complement space (recall $X_i$ is of rank $r$). As long as $\sigma^{i+1}_{r+1} \ll \sigma^{(j-1)}_{m}$, the iteration will converge; if not, the total error will be dominated by 
the error in the normal space (see figure \ref{fig:rankDiscovery}).
For an in-depth analysis of the decay of this $\PMp\left(X_i + \Delta t \LLo\right)$, we refer to \cite{iterconv}. In summary, high-order convergence is preserved if the initial point is close enough to the fixed point, and that distance is less than the smallest singular value of the fixed point. If the smallest singular value of the fixed point is negligible, e.g., of the order of machine precision, the rank of the system should be truncated, which is discussed in section \ref{sec:rankred}. 



\subsection{Illustration}
\label{grad-descent-ex}

To demonstrate the efficacy of the gradient-descent and optimal perturbative retractions, we study their convergence properties. We use the same problem setup as in Section \ref{sec:optimal-pert-ex}, hence $i=0$.

\begin{figure}[h]
     \centering
     \includegraphics[width=0.85\textwidth]{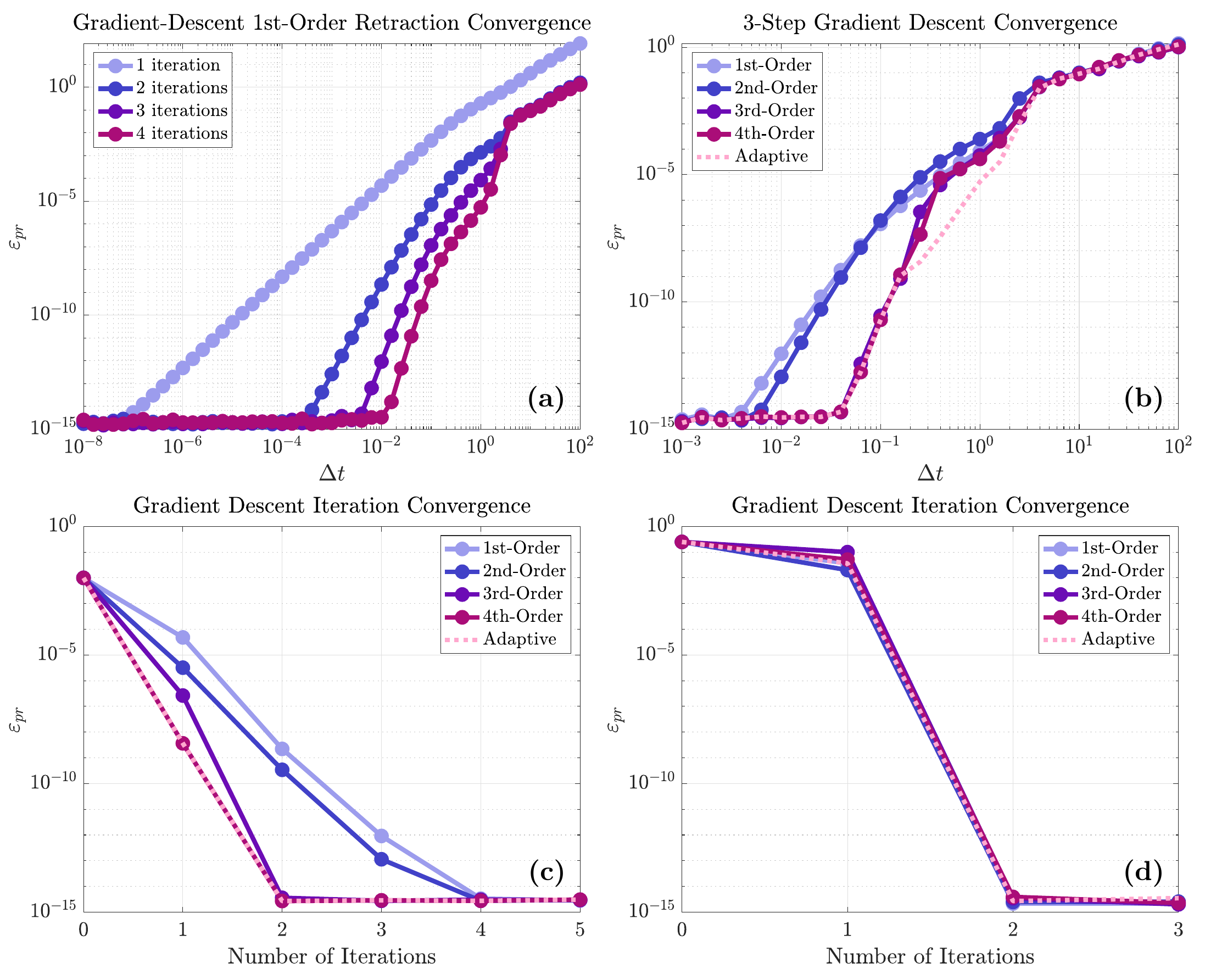}
     \captionsetup{font=footnotesize,labelfont=footnotesize}
     \caption{(a) Convergence of the gradient-descent iterations with various $\Delta t$ using a first-order perturbative retraction (\ref{eq:corr1}). Using more iterations provides more accurate approximations. 
     (b) Convergence using 3 steps of gradient descent with different retractions; high-order (and adaptive-order \cite{charous_lermusiaux_SJSC2023a}) retractions lead to faster convergence. 
     (c)
     Projection-retraction error vs.\ iteration number using different retractions with $\Delta t = 0.01$. Again, the higher-order retractions converge the fastest. 
    (d)
     Projection-retraction error vs.\ iteration number with $\Delta t = 0.25$, but here we insist $X_0 + \Delta t \LLo' \in \Mr$. All retractions converge to the best approximation to machine precision in two iterations.}
     \label{fig:gradDescentConvergence}
\end{figure}

In Figure \ref{fig:gradDescentConvergence}, the first three panels are analogous to those of figures \ref{fig:optRetConvergence} and \ref{fig:optvOriginal}, where $X_0 + \Delta t \LLo \not \in \Mr$. By just using first-order retractions iteratively, we obtain high-order convergence to the best low-rank approximation. Hence, the computational cost for high-order retractions does not grow due to the larger expressions for higher-order corrections. In addition, we observe that utilizing higher-order perturbative retractions with multiple iterations accelerates convergence further. In the bottom right panel, we swap out $\LLo$ for $\LLo'$ such that $X_0 + \Delta t \LLo' = \PM(X_0 + \Delta t \LLo)$. In this case, even with a relatively large $\Delta t$, we observe extremely rapid convergence to the best low-rank approximation, which suggests (but does not rigorously show) superlinear convergence.

\section{Robust retractions with rank adaptation}
\label{sec:rankAdaptive}

\subsection{Derivation}
\label{sec:rankAdaptiveDerivation}
So far, we have shown how to approximate the truncated SVD via a perturbation series, yielding novel, effective retractions accurate up to arbitrary order. We have also shown how to iterate on the low-rank manifold to perform arbitrarily-high-order gradient descent on the low-rank manifold using these retractions. In this section, we develop an algorithm to perform rank-adaptive integration, which requires a retraction that is robust to small singular values.

To see why a robust retraction for rank-adaptive integration is needed, consider the first step at time $t_{i+1}$ after augmenting the rank of our system from rank $r_i$ to $r_{i+1}$. We denote the singular values of $X_i$ as $\sigma_i^{(j)}$, $j=1,\ldots, r_i$, and those of $X_{i+1}$ as $\sigma_{i+1}^{(j)}$, $j=1,\ldots, r_{i+1}$. 
By equation (\ref{eq:bestProjMethod}), for the best possible approximation $X_{i+1}^* = \PMrii(X_i + \Delta t \LLo)$, we then have by Weyl's inequality \cite{weyl},
\begin{gather*}
    \sigma_{i+1}^{(j)} \leq \sigma^{(j)}_{i} + \BigO\left(\Delta t\right).
\end{gather*}
This, of course, assumes that $\|\LLo\| = \BigO(1)$. The implications are that for $r_i < j \leq r_{i+1}$, as $\sigma^{(j)}_{i}=0$, we have $\sigma_{i+1}^{(j)} \sim \Delta t$, and so $X_{i+1}$ will be ill-conditioned assuming $\|X_{i+1}\| = \BigO(1)$. As such, we must avoid inverting $Z_i^TZ_i$ in our retractions.

\subsubsection{Robust retractions}
\label{sec:robust}
Here, we develop two methods to robustify our high-order retractions. The most straightforward approach is as follows: instead of inverting $Z_i^TZ_i$, we simply use the Moore-Penrose pseudoinverse. 
When $Z_i^TZ_i$ is rank-deficient, the linear systems (\ref{eq:corr1}-\ref{eq:corr4}) do not admit unique solutions for $\{\Ud_j\}_j$. As such, we may regularize the system and still satisfy the equations. So that the perturbation series converges, it is natural to find least-squares solutions, which is precisely what the pseudoinverse does.
The pseudoinverse of any given matrix $A$ may be calculated by taking the truncated SVD (or eigendecomposition), $USV^T$, and then inverting singular values with magnitude greater than some threshold. Since we always invert a Hermitian matrix, a more numerically accurate algorithm takes the truncated SVD of $A$ and then the QR decomposition of $\sqrt{S}U^T$; the transpose of the resulting upper triangular matrix $R$ gives a Cholesky decomposition of $A = R^TR$. 
To solve a rank-$r$ linear system, we can then simply use forward and backward substitution. Otherwise, in the rank-deficient case, a linear least-squares problem is solved.

The advantages of this pseudoinverse approach are that it can be used with both our high-order perturbative retractions and incorporated into our gradient-descent retractions, and it preserves mode continuity, which is desirable for uncertainty quantification \cite{lin_PhDThesis2020,lin_lermusiaux_NM2021}.  
The pseudoinversion, however, induces an additional source of error accumulation. When the singular values of $X_i$ are small but nonzero, truncating the singular values of the correlation matrix is inherently another approximation, and the truncation threshold we choose generates a lower bound on the minimum error a low-rank integration scheme can attain. 
This effect is depicted in Section \ref{sec:numExp},  Fig.\ \ref{fig:matDiffEq}.

To avoid the error accumulation due to the pseudoinverse, we introduce an alternative robustification technique by building off of our first-order retraction (\ref{eq:corr1}) before re-orthonormalization.
\begin{gather}
    U_{i+1} = U_i + \Delta t \PUpi \LLo Z_i\left(Z_i^T Z_i \right)^{-1} \label{eq:uNonRobust}\\
    Z_{i+1}^{\text{optimal}} = Z_iU_i^TU_{i+1} + \Delta t \LLo^TU_{i+1} \label{eq:zUpdate}
\end{gather}
Importantly, we only care about the column space of $U_{i+1}$ rather than its individual columns because $Z_{i+1}$ will optimally adapt to any rotation to $U_{i+1}$. So, we right-multiply our update equation for $U$ by $Z_i^TZ_i$ to obtain the following equation.
\begin{gather}
    U_{i+1}\left(Z_i^T Z_i \right) = U_i\left(Z_i^T Z_i \right) + \Delta t \PUpi \LLo Z_i \label{eq:uzUpdate}
\end{gather}
To reiterate, the left-hand sides of equations (\ref{eq:uNonRobust}) and (\ref{eq:uzUpdate}) span the same space. So, our robust first-order retraction is obtained by just orthonormalizing (\ref{eq:uzUpdate}) and then computing $Z_{i+1}$ according to (\ref{eq:zUpdate}).
\begin{gather}
     U_{i+1}^{\text{robust}} = \texttt{orth}\left(U_i\left(Z_i^T Z_i \right) + \Delta t \PUpi \LLo Z_i \right) \label{eq:uUpdateRobust}
\end{gather}
The orthonormalization may be done by a QR decomposition, SVD, or other algorithm of choice such as the re-orthonormalization procedure in \cite{lin_lermusiaux_NM2021}.

Now we have a robust, first-order perturbative retraction. It would be nice to obtain robust higher-order corrections for Eqs.\ (\ref{eq:corr2}, \ref{eq:corr3}, \ref{eq:corr4}). 
However, the above methodology does not extend to these higher-order corrections. This is because we need to sum the corrections in (\ref{eq:pertSeries}), which is not invariant to individual rotations of $\Ud_j$. Fortunately, we can utilize our gradient-descent iterations (Alg.\ \ref{alg:gradDescentRets}) with the robust first-order retraction (Eqs.\ \ref{eq:zUpdate} and \ref{eq:uUpdateRobust}) that utilizes a re-orthonormalization step in each inner gradient-descent iteration to obtain arbitrarily high-order retractions robust to overapproximation provided $X_i + \Delta t \LLo \in \Mr$ or $\PMp(X_i + \Delta t \LLo)$ decays sufficiently quickly (see section \ref{sec:analysis}). Note, however, that high-order convergence of the projection-retraction error is not guaranteed, and a counterexample when the normal closure error $\en$ dominates is provided in figure \ref{fig:rankDiscovery}.

The robust retraction defined by Eqs.\ (\ref{eq:zUpdate}) and (\ref{eq:uUpdateRobust}) avoids the error accumulation from the pseudoinverse. But, we pay for it by sacrificing mode continuity. In (\ref{eq:uUpdateRobust}), we are integrating the span of $U$ rather than individual modes, and so we do not keep track of how each mode evolves. In summary, there is a tradeoff, and the user can decide what is important for a particular problem. The pseudoinverse results in additional error accumulation but preserves mode continuity. 
The new robust retraction with gradient descent does not accumulate extra error, but it loses mode continuity. 
Both techniques preserve high-order convergence when the system is well-conditioned. However, to preserve high-order convergence when the system is ill-conditioned, time steps must be sufficiently small such that the distance from our initial point $X_i$ and the fixed point $\PM(X_i + \Delta t \LLo)$ is smaller than the smallest singular value of $X_i$ \cite{iterconv}. Geometrically, the low-rank manifold curvature tends to infinity, and these high-order corrections cannot capture areas of the manifold that are not smooth unless sufficiently small steps are taken.

\subsubsection{Rank augmentation}
\label{sec:rankaug}
As mentioned in Section \ref{sec:intro}, different algorithms and criteria have been developed to determine when to augment the rank of our system \cite{feppon_lermusiaux_SIMAX2018a,gao2022riemannian,lin_PhDThesis2020,zhou2016riemannian}. Our approach, depicted in Figure \ref{fig:adaptiverank}, is to measure the angle between the full-rank direction $\LLo$ and its projection onto the tangent space. 
This provides a non-dimensional parameter related to how fast the dynamics departs from the low-rank manifold. If the angle is larger than some threshold $\theta^*$, we increment the rank by a pre-specified value $r_{\text{inc}}$ up to a maximum rank of $r_{\text{max}}$. 
To reduce rapid variations in the rank as a function of time, one can alter the rank of the solution only if not altered in recent time steps. Simple dynamical systems can be used to do this using a memory and forgetting rate, e.g., \cite{lermusiaux_PhDThesis1997,lermusiaux_JMS2001,ryu_et_al_Oceans2021}.

\begin{figure}[h]
    \centering
    \includegraphics[width=.48\linewidth]{}
    \captionsetup{font=footnotesize,labelfont=footnotesize}
    \caption{Depicted above is a point $X$ on the low-rank manifold $\Mr$. From that point, the dynamics dictate we travel in direction $\LLo$; this direction is not known a priori and changes within time integration steps, which our DORK schemes take into account. To stay on the manifold, the dynamics (to first-order) must be projected onto the tangent space. The angle $\theta$ between $\LLo$ and $\PT\LLo$ is used as our metric to determine when to increase the rank of the solution.}
    \label{fig:adaptiverank}
\end{figure}

The next question becomes: how do we actually augment the rank of the system? First, we artificially augment $X_i = U_iZ_i^T$ as follows.
\begin{gather}
    X_i = \begin{bmatrix}U_i & Q \end{bmatrix} \begin{bmatrix}Z_i^T \\ 0 \end{bmatrix} \equiv \hat{U}_i\hat{Z}_i^T \label{eq:augmented}
\end{gather}
Above, $Q$ is chosen to be semi-orthonormal with columns that are orthogonal to the columns of $U_i$. Then, we may simply apply our retractions using $\hat{U}_i$ and $\hat{Z}_i$. However, we note that if we apply our update to $U_{i+1}$ first, we will not update the subspace spanned by $Q$ in $\hat{U}$ due to the zeros in $\hat{Z}$. As such, we recommend first updating $\hat{Z}$ via a simplified form of (\ref{eq:zUpdate}) below. 
\begin{gather}
    \hat{Z}_i \leftarrow \hat{Z}_i + \Delta t \LLo^T \hat{U}_i\label{eq:simpleZUpdate}
\end{gather}
To compensate for the change in $\hat{Z}_i$, we must calculate $\LLo'$ such that $U_iZ_i^T + \Delta t \LLo = \hat{U}_i\hat{Z}_i^T + \Delta t \LLo'$. Finally, we apply a retraction of our choice starting from point $\hat{U}_i\hat{Z}_i^T$ in direction $\LLo'$ to update the span of $\hat{U}$ (in addition to updating $\hat{Z}$ again).

The ambiguity here is how to choose $Q$, i.e.\ how to augment our subspace $U$. In \cite{lin_PhDThesis2020}, Lin considers taking the first few left singular vectors of the residual of $\LLo$, defined as $(I-\PT) \LLo$, with the largest singular values \cite{lin_PhDThesis2020}. 
This approach, while principled, can be expensive as it can require computing the SVD of $(I-\PT) \LLo$ at each step. A computationally efficient implementation of \cite{lin_PhDThesis2020}'s approach is to use the randomized SVD. 
Assuming $\LLo$ is low-rank, obtaining the first few singular vectors of $(I-\PT) \LLo$ is computationally cheap and provably accurate (see \cite{randSVD}). 
The randomized SVD inherently introduces stochastic variability into the rank augmentation algorithm, which is analyzed in \cite{hauck2022predictor}. Here, the randomized SVD is only used to choose the basis with which we augment the subspace, so it does not immediately induce a new error in integrating the dynamical system along the low-rank manifold. However, the stochastic variability may result in a slightly suboptimal choice of the basis $Q$. 
See Algorithm \ref{alg:rankAugment} for an implementation. We note that any retraction robust to overapproximation may be used in Algorithm \ref{alg:rankAugment}, e.g.\ the robust perturbative retraction with automatic gradient descent, and it should ensure that $U_{i+1}$ is orthonormalized. 

\subsubsection{Rank reduction} \label{sec:rankred} To reduce the rank of a point after integration, the process is straightforward \cite{lermusiaux_PhysD2007,feppon_lermusiaux_SIREV2018}. Because $U_{i+1}$ is orthonormal, the singular values of $X_{i+1}$ are defined by the eigenvalues of $Z_{i+1}^TZ_{i+1}$. As such, the impact of truncating the rank of $X_{i+1}$ measured by the Frobenius norm is equal to the sum of the eigenvalues that we omit. Hence, we drop the smallest eigenvalues and their associated eigenvectors so long as the truncation results in a change in Frobenius norm less than a predetermined value $\sigma^*$. To remove the respective singular vectors from $X_i$, we project $U_{i+1}$ and $Z_{i+1}$ onto a truncated set of eigenvectors of $Z_{i+1}^TZ_{i+1}$.
We refer to Algorithm \ref{alg:rankAugment} for details.

\begin{algorithm}
\caption{Angular rank-adaptive retraction with randomized SVD}
\label{alg:rankAugment}
\begin{algorithmic}[1]
\REQUIRE{$X_i \in \Mri$, $\LLo \in \mathds{R}^{m\times n}$, $\Delta t \in \mathds{R}$, $\theta^* \in [0,\pi/2]$, $\sigma^* \in [0,1]$, $r_{\text{inc}} \in \mathds{Z}^+$, $r_{\text{max}} \in \mathds{Z}^+$}
\ENSURE{$X_{i+1} \in \Mrii$}
\STATE{$r_i = \text{rank}\left(X_i \right)$}
\STATE{$\Dot{U}_i = \PUnp \LLo Z_i (Z_i^TZ_i)^{-1}$, \quad $\Dot{Z}_i = \LLo^TU_i$}
\STATE{$\PT \LLo = U_i \Dot{Z}_i + \Dot{U}_iZ_i^T$}
\STATE{$\theta = \cos^{-1}\left(\frac{\|\PT \LLo\|}{\|\LLo\|}\right)$}
\IF{(($\theta^* = 0$) or ($\theta > \theta^*$)) and ($r_i < r_{\text{max}}$)}
\STATE{$r_{\text{inc}} = \min(r_i,r_{\text{inc}},r_{\text{max}}-r_i)$}
\STATE{$Q = \texttt{randSVD}\left(\PUnp \LLo \right)$ via \cite[alg. 4.4]{randSVD}}
\STATE{$\hat{U}_i = \begin{bmatrix}U_i & Q \end{bmatrix}, \quad Z_i = \begin{bmatrix} Z_i & \texttt{zeros}\left(n,r_{\text{inc}}\right) \end{bmatrix}$}
\STATE{$\hat{Z}_i \leftarrow \hat{Z}_i + \Delta t \LLo^T\hat{U}_i$}
\STATE{$\LLo \leftarrow \LLo - (\hat{U}_i\hat{Z}_i^T - U_iZ_i^T)/\Delta t$}
\ENDIF
\STATE{$X_{i+1} = \Ret_{X_i}(\Delta t \LLo - (X_{i+1} - X_i))$}
\STATE{$V,\Lambda = \texttt{eig}\left(Z_i^TZ_i \right)$ with eigenvalues (and associated eigenvectors) sorted in ascending order}
\STATE{$\text{cutoff} = \text{last index } j \text{ where } (\sum_{i=1}^j\Lambda(i)/\sum_i\Lambda(i))^{1/2} < \sigma^*$}
\STATE{$V \leftarrow V(:,\text{cutoff}+1:\text{end})$}
\STATE{$U_{i+1} \leftarrow U_{i+1}V, \quad Z_{i+1} \leftarrow Z_{i+1}V$}
\end{algorithmic}
\end{algorithm}

\subsection{Illustration}
\label{sec:rankAdaptIllustration}
To illustrate our robust retraction with gradient descent and rank augmentation, we consider the problem of rank discovery. Suppose we have a low-rank approximation $X$ to $\Xf$, where $X \in \mathds{R}^{50{}0\times 220}$ has rank 20, and $\Xf = X + \Delta t \LLo$ has rank 125, where $X$ and $\LLo$ have Frobenius norm one, and $\Delta t = 0.1$. The matrices $X$ and $\LLo$ are initialized randomly as in Sect.\ \ref{sec:optimal-pert-ex}.

Our goal is to recover the full-rank $\Xf$ efficiently using our retractions. We combine our rank-adaptive retraction \ref{alg:rankAugment} and our automatic gradient-descent retraction \ref{alg:autoGradDescentRet} to create an iterative scheme that keeps increasing the rank of $X$ until the normalized local retraction error $\el$ has norm below a specified threshold, $\el^*$. We specify the details of the implementation in Algorithm \ref{alg:rankDiscovery}, where we set the incremental rank increase to 25, the max rank to 200, $\el^* = 10^{-6}$, and the maximum number of iterations in gradient descent $N_{\text{max}} = 16$.

\begin{algorithm}
\caption{Automatic, adaptive retraction for rank discovery and compression}
\label{alg:rankDiscovery}
\begin{algorithmic}[1]
\REQUIRE{$X_i \in \Mri$, $\LLo = U_L Z_L^T \in \mathds{R}^{m\times n}$, $\Delta t \in \mathds{R}$, $N \in \mathds{Z}^+$, $\Delta^* \in \mathds{R}$, $r_{\text{inc}} \in \mathds{Z}^+$, $r_{\text{max}} \in \mathds{Z}^+$, $\el^* \in \mathds{R}$, $N_\text{max} \in \mathds{Z}^+$}
\ENSURE{$X_{i+1} \in \Mrii$}
\STATE{$X_{i+1} = X_i$}
\WHILE{$\|X_{i}+\Delta t \LLo - X_{i+1}\|/\|X_i\| > \el^*$}
\STATE{$X_{i+1} = \texttt{rankAdaptiveRetraction}(X_{i+1},\LLo - (X_{i+1}-X_i)/\Delta t = U_LZ_L^T,\Delta t, \theta^* = 0, \sigma^* = \el^*, r_{\text{inc}}, r_{\text{max}}, N_{\text{max}})$ using automatic gradient-descent retraction internally with $\Delta^* = \el^*$} \ENDWHILE
\end{algorithmic}
\end{algorithm}

Figure \ref{fig:rankDiscovery} shows different errors from Table \ref{tab:keyExpressions} as a function of iteration number as well as how the rank changes. When we underestimate the rank of the solution in the first few outer steps of the algorithm, the local retraction error and the normal closure error stay relatively large during the inner gradient-descent steps since they include error in the normal space. We see the projection-retraction error decrease, albeit quite slowly, when we underapproximate the rank. This is in contrast to what we observed in section \ref{sec:gradDescent}, where we rapdily converged to the best low-rank approximation. The difference here is that we are starting without a good initial guess for the subspace spanned by $U$ after augmenting the rank. Essentially, we are trying to search a high-dimensional vector space via perturbations, so convergence is slow. What's more, in optimizing the span of $U$, there exist local minima for the left singular vectors of $\Xf$ with associated singular values that are not the largest in magnitude. For example if we set $U$ to the left singular vectors with the smallest singular values, we would still reach a stationary point where $\PT \LLo = 0$. The saving grace of this algorithm, though, is that when we hit or exceed the rank of the full-rank solution, all of the errors plumet due to the quadratic convergence of the projection-retraction error and the fact that $\en = 0$.

\begin{figure}
     \centering
     \includegraphics[width=1\textwidth]{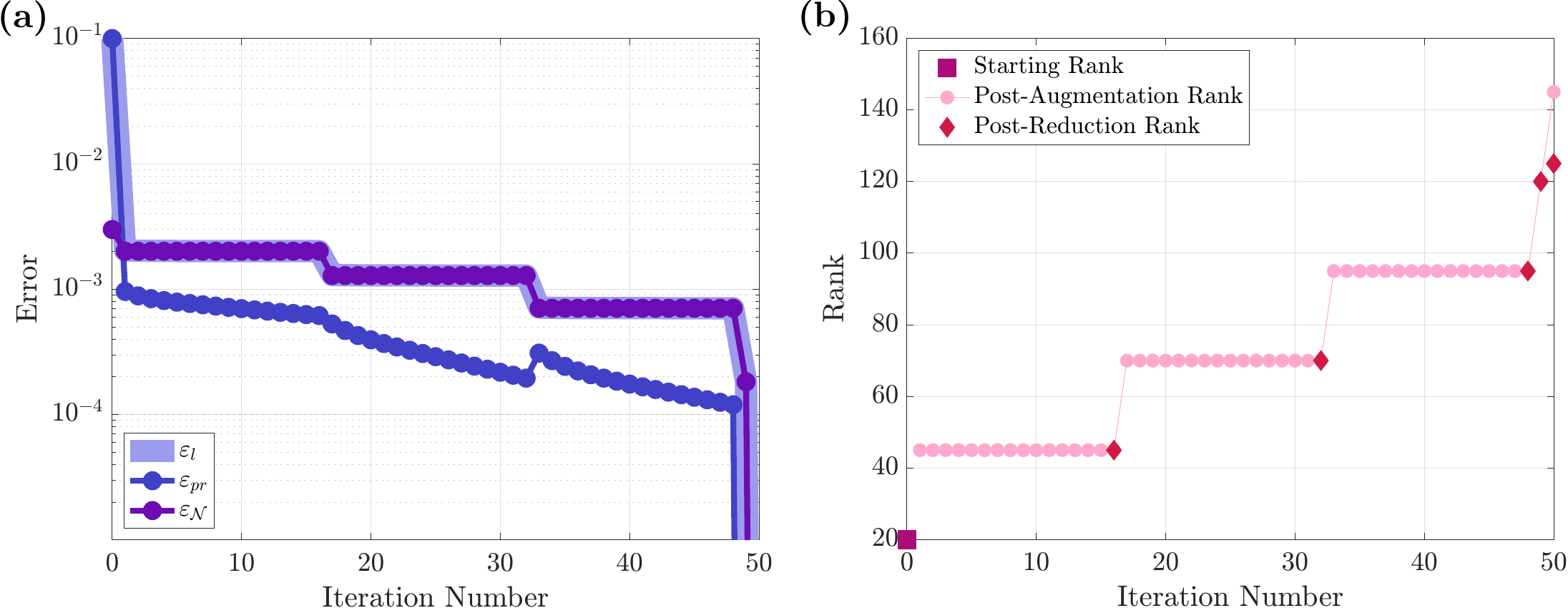}
     \captionsetup{font=footnotesize,labelfont=footnotesize}
     \caption{Rank discovery using algorithm \ref{alg:rankDiscovery} with $\el^* = 10^{-6}$. 
     (a) Local retraction error, projection-retraction error, and normal closure error (see Table \ref{tab:keyExpressions}). During the internal iterations of the gradient-descent retraction, the projection-retraction decreases. Then, when the rank is increased, the projection-retraction error increases since we are now measuring the error between our current approximation and new, higher-rank matrix. The normal closure error remains constant during the inner gradient-descent iterations since it is not a function of the retractions. Of course, it monotonically decreases as we increase the rank. The local retraction error remains almost constant during the inner iterations of gradient descent since it includes $\en$, which dominates the total error. 
     (b) Rank of each iteration as a result of augmentation as well as truncation. Note that the truncation only takes effect once we exceed the true rank of the solution at the last iteration.}
     \label{fig:rankDiscovery}
\end{figure}

\section{Novel DORK schemes}
\label{sec:newDORK}

In this Section, we build off our optimal perturbative and gradient-descent retractions to develop two new low-rank integration schemes based off the Dynamically Orthogonal Runge-Kutta (DORK) schemes \cite{charous_lermusiaux_SJSC2023a}. These schemes integrate along the low-rank manifold, updating the subspace upon which we project the system's dynamics in stages during the time step. 
To do so, instead of taking $\LLo$ to be a constant $\BigO(1)$ term, we expand it as a perturbation series itself.
\begin{gather}
    \Delta t \LLo \equiv \sum_{j=1}^{\infty}\Delta t^{j}\Ld_j \label{eq:LPert}
\end{gather}
We denote the partial sum $\LLo^{(k)} = \sum_{j=1}^k \Delta t^j \Ld_j$, which is a $k$th-order integration scheme, and each $\Ld_j$ is a $j$th-order correction to $\LLo^{(j-1)}$ such that $\LLo^{(j)}$ increases an order of accuracy. To construct integration schemes as a perturbation series, we refer to \cite{charous_lermusiaux_SJSC2023a}.

\subsection{so-DORK schemes}

First, we derive the stable, optimal Dynamically Orthogonal Runge-Kutta (so-DORK) schemes.
The process is straightforward; we substitute (\ref{eq:pertSeries}) and (\ref{eq:LPert}) into (\ref{eq:optCondition}) and solve for the first- through fourth-order corrections.
\begingroup
\allowdisplaybreaks
\begin{subequations}
\begin{align}
    &\Ud_1 = \PUnp \Ld_1 Z_i (Z_i^TZ_i)^{-1} \label{eq:corr1_so-DORK}\\
    &\Ud_2 = \left[\PUnp\left(\Ld_1\Ld_1^TU_i + \Ld_2 Z_i\right) - \Ud_1\left(U_i^T \Ld_1 Z_i +  Z_i^T \Ld_1^T U_i\right) \right](Z_i^TZ_i)^{-1} \label{eq:corr2_so-DORK}\\
    &\begin{aligned}
    &\Ud_3 = \left[\PUnp \left(\Ld_1 \Ld_1^T \Ud_1 + \left(\Ld_1\Ld_2^T + \Ld_2 \Ld_1^T\right) U_i + \Ld_3 Z_i \right) \right.\\&\phantom{\Ud_4} -
    \Ud_2 \left(U_i^T \Ld_1 Z_i +  Z_i^T \Ld_1^T U_i\right) -
    \Ud_1 \left(U_i^T \Ld_2 Z_i +  Z_i^T \Ld_2^T U_i \right. \\&\phantom{\Ud_4} \left.\left. + U_i^T\Ld_1\Ld_1^T U_i + \Ud_1^T\Ld_1 Z_i + Z_i^T\Ld_1^T\Ud_1 - \Ud_1^T\Ud_1 Z_i^T Z_i  \right)\right](Z_i^T Z_i)^{-1} \label{eq:corr3_so-DORK}
    \end{aligned}\\
    &\Ud_4 = \left[\PUnp\left(\Ld_1\Ld_1^T\Ud_2 + \left(\Ld_1\Ld_2^T + \Ld_2 \Ld_1^T\right) \Ud_1 + \left(\Ld_1\Ld_3^T + \Ld_3 \Ld_1^T + \Ld_2\Ld_2^T\right) U_i \right. \right. \notag
    \\ &\phantom{\Ud_4}\left. + \Ld_4 Z_i \right) - \Ud_3\left(U_i^T \Ld_1 Z_i +  Z_i^T \Ld_1^T U_i\right) \notag
     -\Ud_2\left(U_i^T \Ld_2 Z_i +  Z_i^T \Ld_2^T U_i \right. \notag
     \\&\phantom{\Ud_4}\begin{aligned}
     &\left. + U_i^T\Ld_1\Ld_1^T U_i + \Ud_1^T\Ld_1 Z_i + Z_i^T\Ld_1^T\Ud_1 - \Ud_1^T\Ud_1 Z_i^T Z_i  \right) -\Ud_1\left(U_i^T \Ld_3 Z_i  \right.\\ 
    &\left. +  Z_i^T \Ld_3^T U_i + U_i^T\left(\Ld_2\Ld_1^T + \Ld_1\Ld_2^T\right)U_i + \Ud_1^T\Ld_2 Z_i + Z_i^T\Ld_2^T\Ud_1 \right. \label{eq:corr4_so-DORK}
    \end{aligned}\\
    &\phantom{\Ud_4}+ U_i^T\Ld_1\Ld_1^T\Ud_1 + \Ud_1\Ld_1\Ld_1^T U_i + \Ud_2^T\Ld_1 Z_i + Z_i^T\Ld_1^T\Ud_2 - \Ud_1^T\Ud_2 Z_i^TZ_i \notag \\ 
    &\phantom{\Ud_4}- \left. \left. \Ud_2^T\Ud_1 Z_i^T Z_i - \Ud_1^T\Ud_1 U_i^T\Ld_1 Z_i - \Ud_1^T\Ud_1Z_i^T\Ld_1^TU_i \right)\right](Z_i^T Z_i)^{-1} \notag
\end{align}
\label{eq:so-DORK}
\end{subequations}
\endgroup

We first remark that the additional high-order $\Ld_j$ are not in equations (\ref{eq:corr1}-\ref{eq:corr4}) as they then remain within $\LLo$.
Second, the update equation for the coefficients $Z_{i+1}$ is (\ref{eq:zUpdate}) or (\ref{eq:zeq}) which leads to stability properties.
The following lemma follows directly from lemma \ref{lem:stability}.
\begin{lemma}
Starting from $X_i$, let $X_{i+1}$ be the result of integrating $\Delta t \LLo$ with an so-DORK scheme given by equations (\ref{eq:corr1_so-DORK}-\ref{eq:corr4_so-DORK}) and (\ref{eq:zeq}). Then $\|X_{i+1}\| \leq \|X_i + \Delta t \LLo\|$. \label{lem:stabilityDORK}
\end{lemma}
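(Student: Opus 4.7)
The plan is to reduce this to Lemma \ref{lem:stability} by observing that the so-DORK construction differs from the optimal perturbative retraction only in how $\LLo$ is represented internally (as the partial sum $\LLo^{(k)} = \sum_{j=1}^k \Delta t^{j-1} \Ld_j$ through the perturbation series \eqref{eq:LPert}), not in the two structural properties on which Lemma \ref{lem:stability} rests. So the whole argument will just be: identify those two properties in the so-DORK setting, then repeat the three-line estimate verbatim.

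First I would write $X_{i+1} = U_{i+1} Z_{i+1}^T$, where $U_{i+1}$ is obtained by assembling the so-DORK corrections \eqref{eq:corr1_so-DORK}--\eqref{eq:corr4_so-DORK} into $\Delta t \Uo = \sum_j \Delta t^j \Ud_j$ and then right-multiplying $U_i + \Delta t \Uo$ by the re-orthonormalization factor $\Reo$ as in the derivation of Section \ref{sec:optPert}. By construction of $\Reo$, we have $U_{i+1} \in \Stmr$, exactly as in the non-DORK case; this is the first of the two needed properties.

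Next I would verify that $Z_{i+1}$ is still given by the optimal closed form \eqref{eq:zeq}, i.e.\ $Z_{i+1} = (X_i + \Delta t \LLo)^T U_{i+1}$. This is the second needed property, and it holds because the lemma's hypothesis specifies that the $Z$-update of \eqref{eq:zeq} is used in conjunction with the so-DORK $U$-update; nothing in the $\Ld_j$ expansion of $\LLo$ alters this step, since \eqref{eq:zeq} arises from the unconstrained least-squares problem in $Z_{i+1}$ given any $U_{i+1} \in \Stmr$.

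With these two facts in hand, the computation is identical to Lemma \ref{lem:stability}: orthonormality of $U_{i+1}$ gives $\|X_{i+1}\| = \|U_{i+1}Z_{i+1}^T\| = \|Z_{i+1}\|$ in the Frobenius norm, and then \eqref{eq:zeq} together with the sub-multiplicative bound $\|A^T U_{i+1}\| \leq \|A\|$ for $U_{i+1}$ with orthonormal columns yields $\|Z_{i+1}\| = \|(X_i + \Delta t \LLo)^T U_{i+1}\| \leq \|X_i + \Delta t \LLo\|$. I do not anticipate any real obstacle here; the only thing worth being careful about is the observation that the stability bound is insensitive to how $\LLo$ itself is computed or expanded, so long as the $Z$-update of \eqref{eq:zeq} and the orthonormality of $U_{i+1}$ are preserved, which is precisely what the so-DORK construction guarantees.
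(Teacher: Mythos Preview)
Your proposal is correct and follows essentially the same approach as the paper's proof, which is a verbatim repetition of the three-line argument from Lemma~\ref{lem:stability}: write $X_{i+1}=U_{i+1}Z_{i+1}^T$ with $U_{i+1}\in\Stmr$ by construction, so $\|X_{i+1}\|=\|Z_{i+1}\|$, and then use \eqref{eq:zeq} to bound $\|Z_{i+1}\|=\|(X_i+\Delta t\LLo)^T U_{i+1}\|\leq\|X_i+\Delta t\LLo\|$. Your additional commentary on why the $\Ld_j$ expansion is irrelevant to the bound is accurate but not needed for the formal argument.
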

\begin{proof}
Let $X_{i+1} = U_{i+1}Z_{i+1}^T$. By construction, $U_{i+1} \in \Stmr$, so $\|X_{i+1}\| = \|Z_{i+1}\|$. 
By (\ref{eq:zeq}), $\|Z_{i+1}\| = \|(X_{i} + \Delta t \LLo)^T U_{i+1}\| \leq \|X_{i} + \Delta t \LLo\|$.
\end{proof}
\qed
This implies that if the Runge-Kutta integration scheme of $\LL(X,t;\omega)$ is stable, its so-DORK counterpart will also be stable, making so-DORK special. Note that full-rank stability integrating $\LL(\Xf,t;\omega)$ does not guarantee low-rank stability due to the dynamical model closure error $\ed$ and normal closure error $\en$ (see table \ref{tab:keyExpressions}). 
However, in practice, these errors are small if the DLRA is expected to perform well; as a result, they tend not to affect stability.

\subsection{gd-DORK schemes}

Now, we utilize our gradient-descent retractions from Section \ref{sec:gradDescent} and develop
gradient-descent Dynamically Orthogonal Runge-Kutta
(gd-DORK) schemes. 
This family of integration schemes also updates the subspace as we integrate, but instead of approximating the high-order curvature of the low-rank manifold from one fixed starting point, the manifold is locally approximated at points along our trajectory. 
A $k$th-order integration scheme is given in Algorithm \ref{alg:DORKdescent}; note that we incorporate perturbations to $\LLo$ as we integrate. For computational efficiency, we typically use a first-order retraction in each substep, though any order retraction may be used. 
Furthermore, an automatic variant may also be derived from Algorithm \ref{alg:autoGradDescentRet}. 
Finally, 
we recall that even for gd-DORK, $X_{i+1}$ is not formed; instead, $U_{i+1}$ and $Z_{i+1}^T$ are updated during the gradient descent inner iterations, as explained at the end of section \ref{sec:gd-derive}.

\begin{algorithm}
\caption{gd-DORK Schemes}
\label{alg:DORKdescent}
\begin{algorithmic}[1]
\REQUIRE{$X_i \in \Mr$, $\{\LLo^{(j)} \in \mathds{R}^{m\times n}\}_{j=1}^k$, $\Delta t \in \mathds{R}$}
\ENSURE{$X_{i+1} \in \Mr$}
\STATE{$X_{i+1} = X_i$}
\FOR{$j=1:k$}
\STATE{$X_{i+1} \leftarrow \Ret_{X_{i+1}}(\Delta t \LLo^{(j)}- (X_{i+1} - X_i))$}
\ENDFOR
\end{algorithmic}
\end{algorithm}

\begin{lemma}
Given a close enough starting point and $\|X_i\| \sim \|\LLo^{(j)}\|$ for all $j$, gd-DORK schemes are consistent, $k$th-order accurate integration schemes. \label{lem:dorkdescent}
\end{lemma}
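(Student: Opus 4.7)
The plan is to proceed by induction on the inner-loop index $j \in \{1,\ldots,k\}$ of Algorithm~\ref{alg:DORKdescent}, showing that after the $j$th iteration $X_{i+1}^{(j)} = \PM(X_i + \Delta t\,\LLo^{(j)}) + O(\Delta t^{j+1})$, so that at $j=k$ the local truncation error against the exact DLRA trajectory $X(t_{i+1})$ is $O(\Delta t^{k+1})$, up to the DLRA closure errors in Table~\ref{tab:keyExpressions} which are intrinsic to the low-rank approximation and independent of the time integration. The cornerstone of the argument is the algebraic identity $X_{i+1}^{(j-1)} + B^{(j)} = X_i + \Delta t\,\LLo^{(j)}$, where $B^{(j)} \equiv \Delta t\,\LLo^{(j)} - (X_{i+1}^{(j-1)} - X_i)$ is the direction passed to the retraction at iteration $j$. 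Combined with the higher-order retraction approximation $\Ret_A(V) = \PM(A+V) + O(\|V\|^{p+1})$ used in Lemma~\ref{lem:geo}, this yields $X_{i+1}^{(j)} = \PM(X_i + \Delta t\,\LLo^{(j)}) + O(\|B^{(j)}\|^{p+1})$ for a $p$th-order retraction.

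For the base case $j=1$, the defining first-order retraction property in Table~\ref{tab:notation} gives $X_{i+1}^{(1)} = X_i + \Delta t\,\PTn\LLo^{(1)} + O(\Delta t^2)$, which, since $\LLo^{(1)} = \LL(X_i,t_i) + O(\Delta t)$, matches the integrated form of \eqref{eq:tanSpaceDiffEq} to $O(\Delta t^2)$ and simultaneously yields consistency and first-order accuracy. For the inductive step, the hypothesis that $\LLo^{(j)}$ is a $j$th-order integrator gives $\PM(X_i + \Delta t\,\LLo^{(j)}) - X(t_{i+1}) = O(\Delta t^{j+1})$, reducing the claim to bounding $\|B^{(j)}\|^{p+1}$ by $O(\Delta t^{j+1})$.

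I expect the main obstacle to be this bound on $\|B^{(j)}\|$, since a direct estimate gives only $O(\Delta t)$: the normal component $\Delta t\,\PUnp\LLo^{(j-1)}$ is generically $O(\Delta t)$. The remedy is to invoke the inductive hypothesis, which yields $X_{i+1}^{(j-1)} - X_i = \Delta t\,\PTn\LLo^{(j-1)} + O(\Delta t^2)$, together with $\Delta t(\LLo^{(j)} - \LLo^{(j-1)}) = \Delta t^j \Ld_j$ from~\eqref{eq:LPert}, so that the increment introduced at iteration $j$ has a structure that the retraction can absorb. Using a retraction of sufficient order $p \geq k$—or equivalently enough compound iterations of a lower-order perturbative retraction from Section~\ref{sec:gradDescent}—makes $O(\|B^{(j)}\|^{p+1})$ compatible with the target $O(\Delta t^{j+1})$. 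The close-enough-starting-point hypothesis, analogous to that of Lemma~\ref{lem:geo}, keeps Taylor expansions around $X_i$ valid and the gradient-descent-like map contractive, so the orders compound across iterations; setting $j=k$ then delivers the claimed $k$th-order local truncation error.
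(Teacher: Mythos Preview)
Your inductive framework and the key identity $X_{i+1}^{(j-1)} + B^{(j)} = X_i + \Delta t\,\LLo^{(j)}$ match the paper's approach, but your resolution of the bound on $\|B^{(j)}\|$ has a real gap. You correctly observe that $\|B^{(j)}\| = O(\Delta t)$ generically and then propose requiring $p \geq k$ so that $\|B^{(j)}\|^{p+1} = O(\Delta t^{k+1})$. That works arithmetically but misses the point of gd-DORK: the scheme is designed to achieve $k$th order using only the first-order perturbative retraction at every stage, and the paper's proof does exactly this, assuming only projection-retraction error $O(\|V\|^2)$. Your fallback ``or equivalently enough compound iterations of a lower-order perturbative retraction'' is circular, since that \emph{is} the gd-DORK mechanism you are trying to analyze.

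The idea you are missing is that the inductive hypothesis must be used to show $\|B^{(j)}\|$ is itself of order $\Delta t^{j}$ (modulo the normal part), not merely $O(\Delta t)$. The paper writes
\[
X_i + \Delta t\,\LLo^{(j)} \;=\; \PM\bigl(X_i + \Delta t\,\LLo^{(j-1)}\bigr) + O(\Delta t^{j}) + \PMp\bigl(X_i + \Delta t\,\LLo^{(j)}\bigr),
\]
using $\LLo^{(j)}-\LLo^{(j-1)}=O(\Delta t^{j-1})$ and smoothness of $\PM$. Subtracting $X_{i+1}^{(j-1)}$ and applying the genuine inductive hypothesis $X_{i+1}^{(j-1)} = \PM(X_i + \Delta t\,\LLo^{(j-1)}) + O(\Delta t^{q})$, $q \geq j$, gives $\|B^{(j)}\| = O(\Delta t^{j}) + \bigl\|\PMp(X_i + \Delta t\,\LLo^{(j)})\bigr\|$. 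Then even a retraction with $O(\|V\|^2)$ error yields $O(\Delta t^{2j}) \leq O(\Delta t^{j+1})$ plus a normal-space contribution, and the induction closes; the normal term is handled either by assuming the target lies on $\Mr$ or by appealing to \cite{iterconv}. Your line ``the inductive hypothesis yields $X_{i+1}^{(j-1)} - X_i = \Delta t\,\PTn\LLo^{(j-1)} + O(\Delta t^2)$'' does not invoke the inductive hypothesis at all---it is the defining property of any first-order retraction, holds for every $j$, and returns you to the $O(\Delta t)$ bound you were trying to escape.
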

\begin{proof}
We will show that, starting from a point $X_i \in \Mr$, Algorithm \ref{alg:DORKdescent} asymptotically approximates $\PM(X_i + \Delta t \LLo^{(k)})$. We will proceed inductively, starting with one initial retraction.

Let $X^{(1)} = \Ret^{(p)}_{X_i}\left(\Delta t \LLo^{(1)}\right)$, where $\Ret^{(p)}$ is a retraction that asymptotically approximates $\PM$ to the $p$th order, and $p \geq 2$. Assuming that $\|X_i\| \sim \|\LLo^{(j)}\|$ for all $j$, we have:
\begin{gather*}
    X^{(1)} = \PM\left(X_i + \Delta t \LLo^{(1)} \right) + \BigO(\Delta t^p).
\end{gather*}
On iteration $j$, we will assume that we have $X^{(j-1)} = \PM(X_i + \Delta t \LLo^{(j-1)}) + \BigO(\Delta t^q)$, where $q \geq j$. Certainly for $j = 2$, this is true. We will analyze the error of another step of gradient descent of sec.\ \ref{sec:gradDescent}, using (\ref{eq:iter}), (\ref{eq:retproperty}), and the assumption about $X^{(j-1)}$.
\begingroup
\allowdisplaybreaks
\begin{align}
    X^{(j)} &= \Ret_{X^{(j-1)}}^{(p)}\left(\Delta t \LLo^{(j)} + (X_i - X^{(j-1)}) \right) \nonumber \\
    &= \PM\left(X_i + \Delta t \LLo^{(j)} \right) + \BigO\left(\left\|\Delta t \LLo^{(j)} + (X_i - X^{(j-1)})\right\|^p\right) \nonumber\\
    &\begin{multlined}= \PM\left(X_i + \Delta t \LLo^{(j)} \right) \\+ \BigO\left(\left\|\Delta t \LLo^{(j)} + X_i - \PM(X_i + \Delta t \LLo^{(j-1)}) + \BigO\left(\Delta t^q\right)\right\|^p\right) \end{multlined} \label{eq:DORKDescentError}
\end{align}
\endgroup

Next, we decompose the full-rank point we are approximating, $X_i + \Delta t \LLo^{(j)}$, into two components, and we then use the fact that $\LLo^{(j)}$ is a $\BigO(\Delta t ^j)$ correction to $\LLo^{(j-1)}$.
\begin{align}
    X_i + \Delta t \LLo^{(j)} &= \PM\left(X_i + \Delta t \LLo^{(j)} \right) + \PMp\left(X_i + \Delta t \LLo^{(j)} \right) \nonumber \\
    &= \PM\left(X_i + \Delta t \LLo^{(j-1)} \right) + \BigO\left(\Delta t^j\right)  + \PMp\left(X_i + \Delta t \LLo^{(j)} \right) \label{eq:pointDecomposed}
\end{align}
We now substitute (\ref{eq:pointDecomposed}) into (\ref{eq:DORKDescentError}).
\begin{gather}
    X^{(j)} = \PM\left(X_i + \Delta t \LLo^{(j)} \right) + \BigO\left(\left\|\BigO\left(\Delta t^j, \Delta t^q\right) + \PMp\left(X_i + \Delta t \LLo^{(j)} \right)\right\|^p\right) \label{eq:DORKDescentConvergence}
\end{gather}
Letting $\epr^{(j)}$ denote the projection-retraction error at iteration $j$ and recalling that $q \geq j$, if $X_i + \Delta t \LLo^{(j)} \in \Mr$, we have that $\|\epr^{(j)}\| \sim \Delta t^{jp}$. Otherwise, we refer to the proof in \cite{iterconv} that shows the orthogonal error decays provided that we start from a point with a distance to the fixed point $\PM(X_i + \Delta t \LLo^{(j)})$ less than the smallest singular value of the fixed point. Hence by induction, we have shown that the gd-DORK scheme is consistent and preserves the order of convergence of the full-rank integration scheme.
\end{proof}
\qed

Analogously to the gradient-descent retractions, if we use stable, optimal retractions as our inner retraction, the resulting gd-DORK scheme will be stable, following directly from lemma \ref{lem:gd_stab}.

\begin{lemma}
\label{lem:gd_dork_stab}
Starting from $X_i$, let $X_{i+1}$ be the result of integrating $\Delta t \LLo$ with a gd-DORK scheme given by algorithm \ref{alg:DORKdescent} using an optimal retraction $\Ret^{\text{opt}}$ given by equations (\ref{eq:corr1}-\ref{eq:corr4}) and (\ref{eq:zeq}) as the final inner retraction. Then $\|X_{i+1}\| \leq \|X_i + \Delta t \LLo\|$.
\end{lemma}
\begin{proof}
    Assume we perform $k$ inner iterations of gradient descent. From the last iteration of the gradient-descent retraction, we have that
    \begin{align*}
        \left\|X_{i+1}^{(k)}\right\| &= \left\|\Ret^{\text{opt}}_{X_{i+1}^{(k-1)}}\left(X_i + \Delta t \LLo^{(k)} - X_{i+1}^{(k-1)} \right)\right\|\\
        & \leq \|X_{i+1}^{(k-1)} + X_i + \Delta t \LLo^{(k)} - X_{i+1}^{(k)}\|\\
        &= \|X_i + \Delta t \LLo^{(k-1)}\|
    \end{align*}
    The inequality above follows from lemma \ref{lem:stability}, and recall that $\LLo = \LLo^{(k)}$ for a $k$th-order accurate integration scheme.
\end{proof}
\qed

As a final remark, we note a key difference between so-DORK and gd-DORK that affects the algorithmic implementation and is linked to (\ref{eq:LPert}).
The so-DORK schemes operate on updates $\Ld_j$ to the differential operator $\LLo^{(j-1)}$ while gd-DORK schemes operate on the $j$th-order integrator $\LLo^{(j)}$ itself. As such, unless updates to $\LLo^{(j)}$ can be computed along the way (as in Heun's method), the most accurate $\LLo^{(j)}$ available at each gradient-descent step should be used. 
That is, for higher-order Runge-Kutta methods where points and functions must be evaluated outside of the integration path, it is more accurate to pre-compute a high-order approximation to $\LLo$ rather than increment the integration order one-by-one as in Alg.\ \ref{alg:DORKdescent}.
%


\section{Numerical experiments}
\label{sec:numExp}
\subsection{Matrix differential equation}
\label{sec:matDiffEq}

First, we compare the efficacy of our new DORK schemes to other integration schemes in the literature on a system of linear oscillators. As in \cite[sec.\ 7.2]{charous_lermusiaux_SJSC2023a}, we solve for a rank-16 approximation of a $26\times 26$ system governed by $\ddot{\Xf} = -\Omega^2 \Xf$ with initial conditions $\Xf(0) = \Xf_0$, $\dot{\Xf}(0) = \dot{\Xf}_0$). We select $\Omega = \diag(\omega_1,\omega_1,\omega_2,\omega_2,\ldots,\omega_{13},\omega_{13})$ with each $\omega_i$ independently chosen from a standard normal distribution. We construct a matrix $R(t)\in\mathds{R}^{26\times 26}$ as a tridiagonal filled with $2\times 2$ block rotation matrices $R_1,\ldots,R_{13}$ such that,
\begin{gather*}
    R_i(t) = \begin{bmatrix}\cos(\omega_i t) & -\sin(\omega_i t) \\ \sin(\omega_i t) & \cos(\omega_i t) \end{bmatrix},
\end{gather*}
and then $R(t) = \diag(R_1,\ldots,R_{13})$. In contrast to the example in \cite{charous_lermusiaux_SJSC2023a}, $S\in\mathds{R}^{26\times 26}$ is a diagonal matrix with non-increasing entries such that the singular values of the system are $S_{ii} = 100 + 10z_i$ for $1 \leq i \leq 14$ and $S_{ii} = 10^{-5(1+(i-14)/12)}$ for $15 \leq i \leq 26$, where $z_i$ are realizations of independent, standard normal random variables. By construction, our system is ill-conditioned with two small singular values out of the sixteen that are not truncated; the condition number of the rank-16 truncated version of $S^TS$ in the Frobenius norm is approximately $3.00 \cdot 10^{15}$. We construct a new matrix $Q\in\mathds{R}^{26\times 26}$ by orthonormalizing a matrix with independent samples from a uniform distribution. It is simple to verify that the exact solution is given by $\Xf(t) = R(t)QS$ with $\Xf_0 = R(0)QS$ and $\dot{\Xf}_0 = \Dot{R}(0)QS$. To apply our DORK schemes, we integrate an augmented state $\begin{bmatrix} X & \dot{X} \end{bmatrix}$.

Figure \ref{fig:matDiffEq} shows the error measured in the Frobenius norm, normalized by the norm of the initial conditions. Compared to the original DORK schemes, so-DORK and gd-DORK are more accurate; they reduce the rate at which error is accumulated, and they preserve high-order convergence that is eliminated by the ill-conditioned matrix inversion in the original DORK schemes. We note however that so-DORK does not converge all the way to the best approximation. This is due to error accumulated by the pseudoinverse. In selecting the threshold at which we truncate singular values before inverting our correlation matrix, we must balance the error incurred by inverting a singular matrix and the error incurred by approximating our correlation matrix. The gd-DORK scheme avoids this by using the robust scheme (\ref{eq:uUpdateRobust}, \ref{eq:zUpdate}) in each gradient descent step. This entirely avoid matrix inversion at the cost of losing mode continuity.

\begin{figure}
     \centering
     \includegraphics[width=\textwidth]{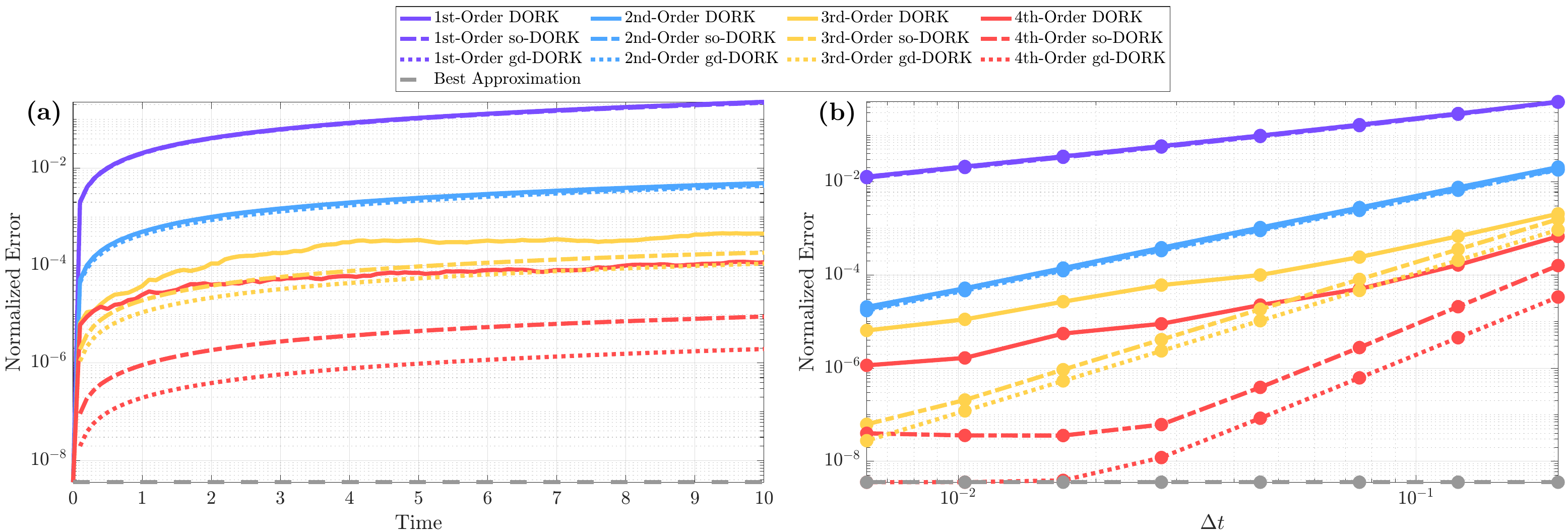}
     \captionsetup{font=footnotesize,labelfont=footnotesize}
     \caption{Matrix differential equation.
     (a) Error accumulation vs.\ time for twelve numerical integration schemes for $t\in [0,10]$ and $\Delta t = 0.1$. As a threshold for truncating singular values for pseudoinversion, we use $10^{-9}\|X(t)\|$. The so-DORK and gd-DORK methods outperform classic DORK, especially at high order.
     (b) Convergence of the error at $t=10$ for multiple values of $\Delta t$. The so-DORK and gd-DORK schemes preserve high-order convergence even when the system is ill-conditioned.}
     \label{fig:matDiffEq}
\end{figure}

In addition, we compare the error accumulated by the three DORK schemes to state-of-the-art numerical schemes.
Namely, we implemented the projected Runge-Kutta (PRK) schemes \cite{rkRets} and the symmetrized projector-splitting integrator \cite{projSplit}. All integration schemes are of second-order accuracy, and results are displayed in Table \ref{tab:MatError}. Overall, we see that all of the DORK schemes improve the projector-splitting and projected Runge-Kutta methods as they update the subspace onto which we project the system dynamics between time steps. Furthermore, the so-DORK and gd-DORK methods outperform classic DORK.

\begin{table}
\small
\centering
\begin{tabular}{| c |c c c c c|} 
 \hline
$N_t$ & PRK &  Proj.-Split. & DORK & so-DORK & gd-DORK\\
 \hline
$50$ &  $2.11 \cdot 10^{-2}$ & $2.11 \cdot 10^{-2}$ & $2.06 \cdot 10^{-2}$ & $1.86 \cdot 10^{-2}$ & $1.80 \cdot 10^{-2}$\\
$134$  & $2.86 \cdot 10^{-3}$ &  $2.86 \cdot 10^{-3}$ &  $2.80 \cdot 10^{-3}$ & $2.63 \cdot 10^{-3}$ & $2.43 \cdot 10^{-3}$\\
$968$  &  $5.40 \cdot 10^{-5}$ &  $5.40 \cdot 10^{-5}$ &  $5.20 \cdot 10^{-5}$ & $4.99 \cdot 10^{-5}$ & $4.62 \cdot 10^{-5}$\\
 \hline
\end{tabular}
\normalsize
\captionsetup{font=footnotesize,labelfont=footnotesize}
\caption{Matrix differential equation. Frobenius norms of the final errors after integrating for $t\in[0,10]$ with the projected Runge-Kutta (PRK), projector-splitting, DORK, so-DORK, and gd-DORK second-order integrators, for three values of the number of time steps, $N_t$. DORK schemes are more accurate by incorporating dynamic subspace updates as we integrate, and so-DORK and gd-DORK are most accurate.}
\label{tab:MatError}
\end{table}

\subsection{Advection-diffusion partial differential equation}
\label{sec:advDiff}

As our next example, we solve the 2D linear advection-diffusion PDE,
\begin{gather}
    \frac{\partial u}{\partial t} + c \cdot \nabla u = \nu \nabla^2 u. \label{eq:advDiffPDE}
\end{gather}
We impose periodic boundary conditions on a $[0,1]^2$ domain. 
This domain
is discretized in space by $256$ points in each dimension using a second-order central finite difference scheme. For time $t \in [0,5]$, to emphasize curvature-related errors, we employ $\Delta t = 10^{-3}$ with a variety of time-integration schemes. For our velocity field $c$, we consider potential flow with some background horizontal velocity superposed with a Rankine vortex.
\begin{gather*}
    c_x = \frac{4}{3} + \begin{cases}
     -\frac{\Gamma y_o}{R^2} & \sqrt{x_o^2 + y_o^2} < R\\
    -\frac{\Gamma y_o}{x_o^2 + y_o^2} & \sqrt{x_o^2 + y_o^2} \geq R
    \end{cases}, \quad
    c_y = \begin{cases}
     \frac{\Gamma x_o}{R^2} & \sqrt{x_o^2 + y_o^2} < R\\
    \frac{\Gamma x_o}{x_o^2 + y_o^2} & \sqrt{x_o^2 + y_o^2} \geq R
    \end{cases}
\end{gather*}
We choose $\Gamma = 8R/3$, $R = 1/16$, and offset variables $x_o = x - 0.5$, $y_o = y - 0.5$. Our initial conditions are haphazardly placed Gaussians,
\begin{gather}
    u(x,y,t=0) = \sum_{i=1}^{7}a_i \exp\left({-\frac{(x-\mu^{(x)}_i)^2 + (y-\mu^{(y)}_i)^2}{2\sigma_i^2}}\right), \label{eq:initCondit}
\end{gather}
where 
\begin{gather*}
    a = \begin{bmatrix}1, & 1, & 1, & 2, & 1, & 1, & 1 \end{bmatrix}^T\\
    \mu^{(x)} = \begin{bmatrix}0.5,   & 0.3,   & 0.354,  & 0.4,   & 0.15,   & 0.65,   & 0.35 \end{bmatrix}^T\\
    \mu^{(y)} = \begin{bmatrix}0.5, & 0.5, & 0.5, & 0.35, & 0.65, & 0.25, & 0.65 \end{bmatrix}^T\\
    \sigma^2 = \begin{bmatrix}200, & 500, & 400, & 400, & 400, & 450, & 450 \end{bmatrix}^T.
\end{gather*} 
To efficiently evolve the system, we truncate the rank of $c_x$ and $c_y$ to a rank-4 approximation, which is the threshold where singular values are less than $10^{-1}$ times the largest singular value, and we use these approximate velocity fields for both the full-rank and low-rank simulations so that we may directly compare the integration error between numerical schemes. In Figure \ref{fig:advDiff}, we show full-rank and low-rank solutions computed with Heun's method and the second-order so-DORK scheme, respectively. We fix the rank to $r = 5$ to contrast with our rank-adaptive solution which dynamically adjusts the rank to capture feature solutions on the fly. Clearly, the rank-adaptive solution is preferable to the under-resolved rank-5 solution. The rank initially increases to accommodate for the advected field hitting the vortex, but then the rank decreases as diffusion dominates the system. The adaptive-rank algorithm \ref{alg:rankAugment} tracks the rank of the full-order solution quite well. Adjusting $\sigma^*$ and $\theta^*$ could yield an even closer match at an increased computational cost.

\begin{figure}
     \centering
     \includegraphics[width=\textwidth]{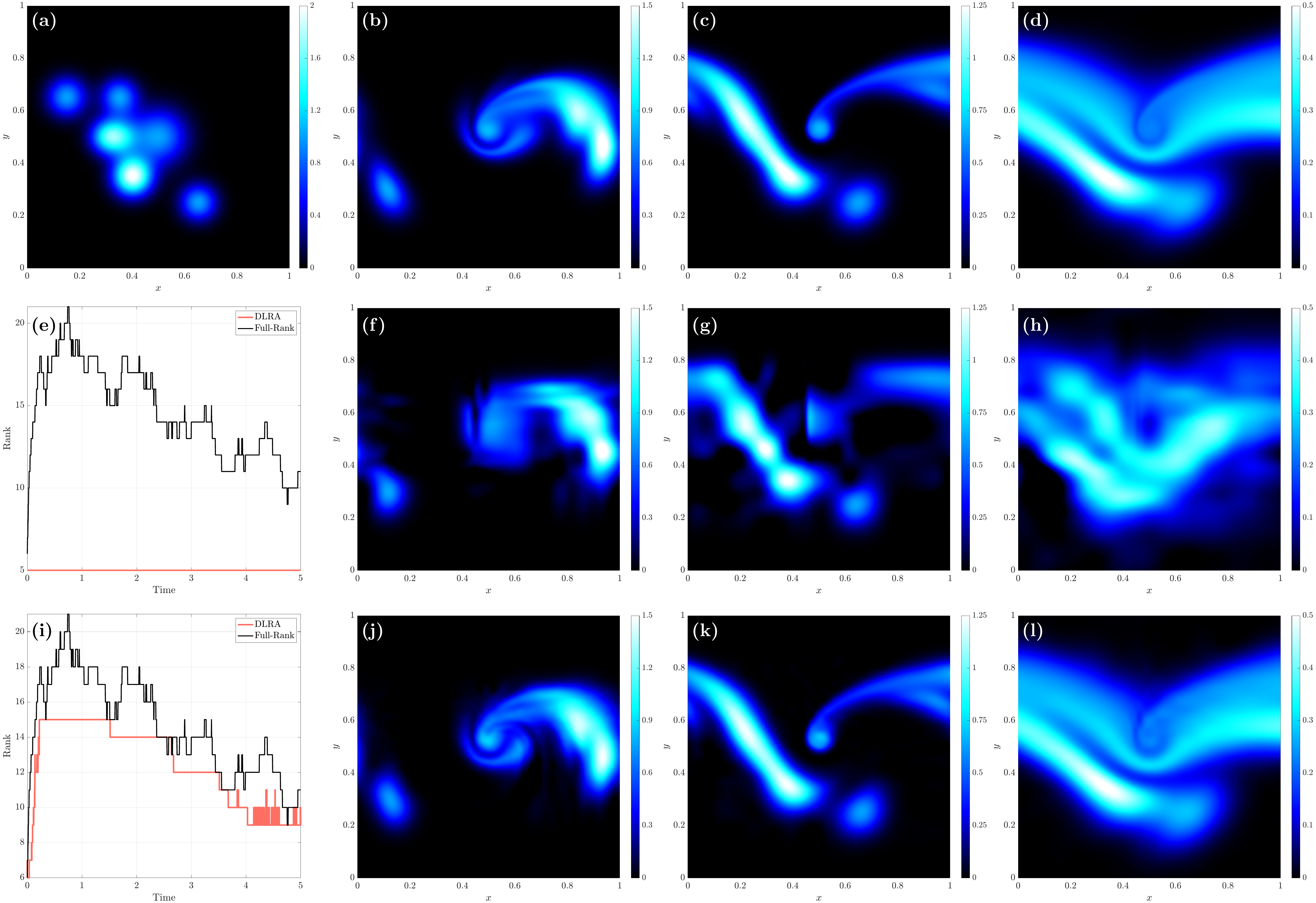}
     \captionsetup{font=footnotesize,labelfont=footnotesize}
     \caption{Advection-diffusion PDE. (a) Initial conditions (\ref{eq:initCondit}). 
     (b-d) Full-rank solution at $t = 0.3$, $0.6$, and $3.0$, respectively. 
     (e) Rank as a function of time: fixed rank-5 (red) and rank of the full-order simulation thresholded at $\sigma^* = 2 \cdot 10^{-3}$ (black).   
     (f-h): Rank-5 solution at the aforementioned three times. 
     (i) As (e), but for the \emph{adaptive} rank as a function of time with $\theta^* = 0.1$, same $\sigma^* = 2 \cdot 10^{-3}$, $r_{\text{inc}} = 1$, and $r_{\text{max}} = 20$ (red).
     (j-l) Corresponding rank-adaptive solution at the same three times.
     The rank-adaptive solution captures the full-rank features while eliminating redundant degrees of freedom.}
     \label{fig:advDiff}
\end{figure}

We also integrate the system with the same integrators from the literature as in Section \ref{sec:matDiffEq}. The mean error (as measured by the deviation from the full-rank numerical solution) divided by the norm of the initial conditions is enumerated in Table \ref{tab:AdvectionDiffError} at different ranks. At lower ranks, the DORK and so-DORK schemes perform the best, possibly because they directly incorporate the high-order curvature of the low-rank manifold, and the system is not ill-conditioned, meaning the perturbation series converge quickly. At higher ranks, the classic DORK scheme does not converge due to the singular correlation matrix. This is where the gd-DORK scheme excels since in each substep, it only projects onto the tangent space of the low-rank manifold while still updating the subspace as we integrate.

\begin{table}
\small
\centering
\begin{tabular}{| c |c c c c c|} 
 \hline
$r$ & PRK &  Proj.-Split. & DORK & so-DORK & gd-DORK\\
 \hline
$5$ &  $2.20 \cdot 10^{-1}$ & $2.13 \cdot 10^{-1}$ & $2.07 \cdot 10^{-1}$ & $2.09 \cdot 10^{-1}$ & $2.14 \cdot 10^{-1}$\\
 \hline
$15$  &  $7.68 \cdot 10^{-3}$ & $7.61 \cdot 10^{-3}$ & DNC & $1.04 \cdot 10^{-2}$ & $7.53 \cdot 10^{-3}$\\
 \hline
\end{tabular}
\normalsize
\captionsetup{font=footnotesize,labelfont=footnotesize}
\caption{ 
Normalized, time-averaged errors from integrating (\ref{eq:advDiffPDE}) with second-order accurate low-rank schemes in space and time. At rank-5, the DORK and so-DORK are most accurate possibly due to their high-order approximation of the low-rank manifold curvature. At rank-15, the system is ill-conditioned, so the classic DORK scheme does not converge. The so-DORK scheme incurs slightly more error due to the pseudoinversion, and the gd-DORK scheme is most accurate.}
\label{tab:AdvectionDiffError}
\end{table}

\subsection{Stochastic Fisher-KPP PDE}
\label{sec:fisherKPP}
In this section, we showcase the flexibility of our gradient-descent retractions by employing them in an implicit-explicit (IMEX) numerical integration scheme rather than an explicit Runge-Kutta scheme. We solve the stochastic Fisher-KPP equation, a reaction–diffusion PDE that, for example, describes biological population or chemical reaction dynamics with diffusion \cite{fisher,kpp}.
\begin{gather}
    \frac{\partial u}{\partial t} = D\frac{\partial^2 u}{\partial x^2} + r(\omega) u(1-u) \label{eq:kpp}
\end{gather}
The PDE (\ref{eq:kpp}) is stochastic (S-PDE) because we model the reaction rate $r$ as a random variable $r \sim \mathcal{U}[1/4,1/2]$, where $\omega$ denotes a simple event in the event space $\Omega$. In general, (\ref{eq:kpp}) admits traveling waves as solutions and is used in ecology \cite{eco,fkppeco}, biology \cite{bio}, and more \cite{reacdiff}. It also forms a building block of more complex ocean biogeochemical models \cite{besiktepe_et_al_JMS2003,lermusiaux_et_al_Oceanog2011,gupta_et_al_Oceans2019,gupta_lermusiaux_PRSA2021}.

Presently, we consider the domain $x \in [0,40]$ and time duration $t \in [0,12.5]$. For boundary conditions, we assume zero diffusive fluxes at the two boundaries,
\begin{gather}
    \frac{\partial u}{\partial x}(0,t) =0~ , \quad \frac{\partial u}{\partial x}(40,t) = 0 . \nonumber
\end{gather}
The initial conditions are stochastic, of the following form.
\begin{gather*}
    u(x,0;\omega) = a(\omega)e^{-b(\omega)x^2}, \quad a \sim \mathcal{U}[1/5,2/5], \quad b \sim \mathcal{U}[1/10,11/10]
\end{gather*}
The random variables $a$, $b$, $r$ are all independent, and we set the diffusion coefficient $D = 1$. 

To solve the S-PDE numerically, we employ two approaches: Monte Carlo (MC) realizations and a DLRA of the MC realizations, where each column of our solution matrix at a fixed time represents one stochastic realization. We use an implicit-explicit Crank-Nicholson leapfrog time integration scheme \cite{imex}, treating the diffusion term with Crank-Nicholson and the reaction term explicitly with leapfrog. Such an integration scheme is numerically efficient because we separate the deterministic and stochastic operators; by treating the stochastic operator explicitly, we avoid having to invert a different matrix for each column/realization of the Monte Carlo solution. 
To initialize the numerical integration, as the leapfrog scheme requires the solution at 2 previous time steps, we use a first-order time-integration scheme for the first time step; the diffusion term is then treated implicitly and the reaction term explicitly. For the numerical space-time discretization, we use $1{,}000$ grid points in $x$, $10{,}001$ points in time, and a second-order centered finite difference stencil for the diffusion term. For the stochastic discretization, we use $1{,}000$ MC realizations.

\begin{figure}
     \centering
     \includegraphics[width=\textwidth]{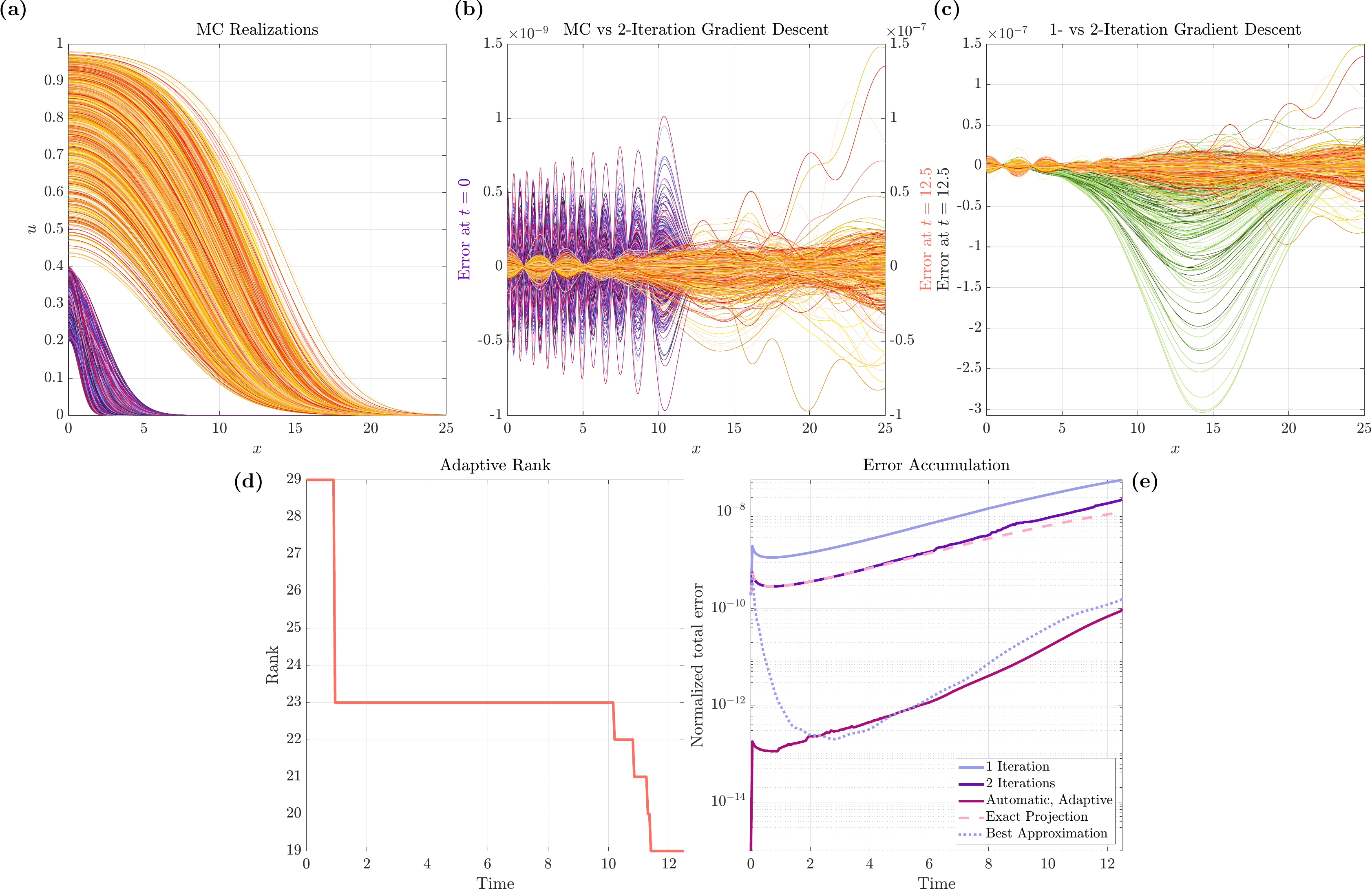}
     \captionsetup{font=footnotesize,labelfont=footnotesize}
     \caption{
     Stochastic Fisher-KPP PDE
     (a) Reference solution, the Monte Carlo runs, at two different times. The cool (pink-purple) colors show solution realizations $u(x,t;\omega)$ at the initial conditions $t=0$, and the warm (yellow-orange) colors show solution realizations at $t = 12.5$. 
     (b) Error of each reconstructed MC realization at the initial and  final times; in both instances, errors are centered around zero and much smaller than the solution magnitudes $u(x,t;\omega)$ (error scales at $t=0$ and $t = 12.5$ differ, and both differ from the scales for $u(x,t;\omega)$). 
     (c) Compares the errors between the 1-iteration (green) and 2-iteration (yellow-orange, as in (b)) gradient-descent retractions at the final time step.
     (d) Rank as a function of time for the automatic, rank-adaptive integration scheme.
     (e) Error measured in the Frobenius norm, normalized by the Frobenius norm of the MC solution at final time. From this non-dimensional error, we find that the 2-iteration gradient-descent retraction more closely follows the error incurred by the exact projection. This scheme outperforms the fixed-rank schemes by orders of magnitude.}
     \label{fig:kpp}
\end{figure}

Figure \ref{fig:kpp} compares the MC run and a rank-15 approximation using our first-order optimal perturbative retraction (\ref{eq:corr1}) applied once or twice iteratively each time step. We also compute the solution using our automatic gradient-descent retraction from Algorithm \ref{alg:autoGradDescentRet} with our adaptive retraction from Algorithm \ref{alg:rankAugment}. 
This scheme is initialized by a rank-29 truncated SVD of the MC initial conditions and has hyperparameters of $r_{\text{inc}} = 5$, $r_{\text{max}} = 30$, $\sigma^* = 10^{-12}$, $\theta^* = 0.05$, $N_{\text{max}} = 8$, and $\Delta^* = 10^{-16}$. 
In addition, we compare these retractions with the exact projection operator (i.e.\ the truncated SVD) applied after every time step, which has no projection-retraction error $\epr$ but does incur dynamical model closure error $\ed$ (see table \ref{tab:keyExpressions}). 
We also compute the best possible approximation given by the truncated SVD of the MC solution, which only incurs normal closure error $\en$ (see Table \ref{tab:keyExpressions}).  In essence, the exact projection operator is the best approximation we can hope to make with the fixed-rank DLRA because it intrinsically does not have error accumulation. 
We see that the 2-iteration retraction significantly reduces the error that accumulates over time, but cannot match the best approximation at rank-15. With our robust, adaptive scheme, however, we are able to outperform the best rank-15  approximation despite the inherent error accumulation. The solution rank reduces after the initial conditions, which is reflected in the solution of the adaptive-rank algorithm. However, the fixed-rank DLRAs already accumulated error from the initial conditions. We also see that the error at $t=12.5$ seems to be concentrated at large values of $x$. This may also be remedied in the future by using weighted norms to reduce error in certain locations.

\section{Conclusion}
\label{sec:conc}

We have derived two new sets of retractions, one building off of the other. Optimal perturbative retractions are an improvement over the original perturbative retractions developed in \cite{charous_lermusiaux_SJSC2023a}.  While we employ a perturbation series in $\Uo$ as before, we use the optimal $Z$ matrix, which incorporates higher-order nonlinearities in $\LLo$. 
The optimal perturbative retractions avoid error associated with overshoot via their optimal choice of coefficients $Z$ given the modes $U$, which is made possible without pseudoinversion by re-orthonormalization. The coefficient update equation (\ref{eq:zeq}) is equivalent to that used in proper orthogonal decomposition \cite{pod1,pod2} and other reduced-basis methods such as dynamic mode decomposition \cite{dmd1,dmd2,dmd3,dmd4}, but here the modes upon which the dynamics are projected are dynamically updated, enabling more accurate model-order reduction without the need for a large offline/training stage. 
The optimal perturbative retractions are more accurate in approximating the best low-rank approximation than their original counterparts and come at a slightly reduced computational cost. The dominating computational cost is $\BigO((m+n)r(r+r_L))$, where $r_L$ is the rank of the discretized differential operator $\LLo$; this is the same complexity as the original perturbative retractions \cite{charous_lermusiaux_SJSC2023a}, the projector-splitting integrator \cite{projSplit}, and the randomized singular value decomposition \cite{randSVD}.

Gradient-descent retractions are easily generated by repeatedly applying optimal perturbative retractions on the residual between a higher-rank system and our current approximation. If the full-rank system is, in fact, on the low-rank manifold, our gradient-descent retractions converge superlinearly to the best low-rank approximation. Otherwise, we expect linear convergence, but typically the rate of convergence is very fast and increases with the number of iterations applied. Our analysis complements that done for the projector-splitting integrator in the context of fixed-point iteration \cite{iterconv}. The gradient-descent retractions with higher-order perturbative retractions are analogous to higher-order generalizations of Newton's method on the low-rank manifold. Using an adaptive perturbative retraction proved very useful for the gradient-descent retractions since, when far away from the best low-rank approximation, low-order retractions can be more accurate, but as our approximation improves over each iteration, higher-order retractions become more efficient.

We have also shown how to robustify retractions in two ways. First, one may use the Moore-Penrose pseudoinverse for the inversion of the correlation matrix in the high-order optimal perturbative retractions. Second, we can robustify the first-order perturbative retraction by only preserving the span of the modes $U$. We incorporate this robust retraction into our gradient-descent scheme, obtaining retractions that converge rapidly to the exact projection operator. Pseudoinversion preserves mode continuity, a useful feature for interpretability, but introduces an extra source of error by truncating small singular values. Our latter approach that only preserves the span does not accumulate additional error, but it sacrifices mode continuity. Either scheme may be converted to a rank-adaptive integrator that allows for highly accurate solutions while adapting to the system rank for improved computational efficiency. When we correctly estimate (or underestimate) the rank of the system, the adaptive retraction with gradient-descent converges rapidly to the optimal low-rank approximation.
Otherwise, convergence can be slow due to a non-optimal choice of subspace $U$. 

Two novel Dynamically Orthogonal Runge-Kutta families of integrators were derived, 
the stable, optimal and 
gradient-descent Dynamically Orthogonal Runge-Kutta schemes,
so-DORK and gd-DORK, respectively. 
The so-DORK schemes build off of the optimal perturbative retractions. By writing the differential operator $\LLo$ as a perturbation series, we update, within the time step, the subspace in which we integrate and in which the system dynamics are projected. Similarly, the gd-DORK schemes update the subspace as we integrate, but they build off of the gradient-descent retractions. 

Finally, we verified the efficacy of our retractions and integrators on three examples. We first compared our schemes with the projected Runge-Kutta \cite{rkRets} and projector-splitting \cite{projSplit} integrators on a linear matrix differential equation. Next, we solved an advection-diffusion PDE, again comparing our retractions with those in the literature. We also showed the utility of our rank-adaptive algorithm using the so-DORK integrator. Finally, we used our gradient-descent retractions to build a low-rank implicit-explicit integrator for a nonlinear, stochastic diffusion-reaction PDE. We compared applying one and two iterations of gradient-descent at each time step. The 2-iteration gradient-descent retraction significantly reduced the error accumulation over time. We also compared the fixed-iteration schemes with our automatic algorithm with adaptive rank, which outperforms the fixed-rank methods partially because it is able to stably overapproximate the rank of the solution. 

In the future, we believe using weighted norms when we derive our retractions may allow us to strategically reduce errors in regions we care the most about \cite{lermusiaux_et_al_oceans2002,lermusiaux_PhysD2007,feppon_lermusiaux_SIMAX2019}. 
Another interesting direction is to use an adaptive deterministic or stochastic dimension in which the dimensions of the matrix we approximate changes in time rather than just the rank. To preserve high-order convergence when our low-rank approximation is ill-conditioned, a new approach could involve using an adaptive time step that depends on the condition number of the system. In addition, deriving implicit DORK schemes may prove beneficial for stiff systems. Finally, research into accelerating convergence while increasing the rank of the solution when the rank of $\LLo$ is large is needed.

\section*{Acknowledgments}
We thank Manan Doshi for his discussion on section \ref{sec:rankAdaptiveDerivation} and the entire MSEAS group for their collaboration.

\bibliographystyle{siamplain}
\bibliography{bib_short,mseas}
\end{document}